\newtheorem{thm}{Theorem}
\newtheorem{cor}{Corollary}
\newtheorem{lemma}{Lemma}
\newtheorem{prop}{Proposition}
\newcommand{\Rho}{\mathrm{P}}
\DeclareSymbolFont{script}{U}{eus}{m}{n}
\DeclareMathSymbol{\Wedge}{0}{script}{"5E}
\newcommand{\oooodots}[4]{\begin{picture}(72,11)
\put(4,1.5){\makebox(0,0){$\bullet$}}
\put(20,1.5){\makebox(0,0){$\bullet$}}
\put(36,1.5){\makebox(0,0){$\bullet$}}
\put(52,1.5){\makebox(0,0){$\bullet$}}
\put(66,1){\makebox(0,0){$\cdots$}}
\put(4,1.5){\line(1,0){53}}
\put(4,8){\makebox(0,0){$\scriptstyle #1$}}
\put(20,8){\makebox(0,0){$\scriptstyle #2$}}
\put(36,8){\makebox(0,0){$\scriptstyle #3$}}
\put(52,8){\makebox(0,0){$\scriptstyle #4$}}
\end{picture}}
\newcommand{\xooodots}[4]{\begin{picture}(72,11)
\put(4,1.5){\makebox(0,0){$\times$}}
\put(20,1.5){\makebox(0,0){$\bullet$}}
\put(36,1.5){\makebox(0,0){$\bullet$}}
\put(52,1.5){\makebox(0,0){$\bullet$}}
\put(66,1){\makebox(0,0){$\cdots$}}
\put(4,1.5){\line(1,0){53}}
\put(4,8){\makebox(0,0){$\scriptstyle #1$}}
\put(20,8){\makebox(0,0){$\scriptstyle #2$}}
\put(36,8){\makebox(0,0){$\scriptstyle #3$}}
\put(52,8){\makebox(0,0){$\scriptstyle #4$}}
\end{picture}}
\newcommand{\oooo}[4]{\begin{picture}(56,11)
\put(4,1.5){\makebox(0,0){$\bullet$}}
\put(20,1.5){\makebox(0,0){$\bullet$}}
\put(36,1.5){\makebox(0,0){$\bullet$}}
\put(52,1.5){\makebox(0,0){$\bullet$}}
\put(4,1.5){\line(1,0){48}}
\put(4,8){\makebox(0,0){$\scriptstyle #1$}}
\put(20,8){\makebox(0,0){$\scriptstyle #2$}}
\put(36,8){\makebox(0,0){$\scriptstyle #3$}}
\put(52,8){\makebox(0,0){$\scriptstyle #4$}}
\end{picture}}
\newcommand{\xoox}[4]{\begin{picture}(56,11)
\put(4,1.5){\makebox(0,0){$\times$}}
\put(20,1.5){\makebox(0,0){$\bullet$}}
\put(36,1.5){\makebox(0,0){$\bullet$}}
\put(52,1.5){\makebox(0,0){$\times$}}
\put(4,1.5){\line(1,0){48}}
\put(4,8){\makebox(0,0){$\scriptstyle #1$}}
\put(20,8){\makebox(0,0){$\scriptstyle #2$}}
\put(36,8){\makebox(0,0){$\scriptstyle #3$}}
\put(52,8){\makebox(0,0){$\scriptstyle #4$}}
\end{picture}}
\newsavebox{\innn}
\sbox{\innn}{\begin{picture}(0,0)
\put(0,6){\oval(6,12)[b]}
\put(0,0){\line(0,1){6}}
\end{picture}}
\def\inn{\usebox{\innn}}
\begin{document}
\title[Special metrics]
{Special metrics and scales in parabolic geometry}
\author[M.G. Eastwood]{Michael Eastwood}
\address{\hskip-\parindent
School of Mathematical Sciences\\
University of Adelaide\\ 
SA 5005\\ 
Australia}
\email{meastwoo@gmail.com}
\author[L. Zalabov\'a]{Lenka Zalabov\'a}
\address{\hskip-\parindent
Department of Mathematics and Statistics\\
Faculty of Science\\ 
\newline Masaryk University\\
Kotl\'a\v{r}sk\'a 2\\ 
Brno 611 37\\
Czech Republic\\
\newline\mbox{\rm and}
\newline Institute of Mathematics\\ Faculty of Science\\
University of South Bohemia\\
Brani\v{s}ovsk\'a 1760\\
\v{C}esk\'e Bud\v{e}jovice 370 05\\
Czech Republic}
\email{lzalabova@gmail.com}
\subjclass{53A20,53A30,53A40}
\begin{abstract} Given a parabolic geometry, it is sometimes possible to find
special metrics characterised by some invariant conditions. In conformal
geometry, for example, one asks for an Einstein metric in the conformal class.
Einstein metrics have the special property that their geodesics are  
{\em distinguished\/}, as unparameterised curves, in the sense of parabolic
geometry. This property characterises the Einstein metrics. In this article we 
initiate a study of corresponding phenomena for other parabolic geometries, in 
particular for the hypersurface CR and contact Legendrean cases.   
\end{abstract}
\renewcommand{\subjclassname}{\textup{2010} Mathematics Subject Classification}

\thanks{This work was supported by the Simons Foundation grant 
346300 and the Polish Government MNiSW 2015--2019 matching fund. It was 
started in 2017 whilst the authors were visiting the Banach Centre at IMPAN in 
Warsaw for the Simons Semester `Symmetry and Geometric Structures.'}

\thanks{During 2017--2019 the second author was also supported by grant
no.~17-01171S from the Czech Science Foundation entitled `Invariant
differential operators and their applications in geometric modelling and
control theory.' During 2020--2022, she is supported by grant no.~20-11473S,
`Symmetry and invariance in analysis, geometric modelling and control theory,'
also from the Czech Science Foundation.}

\thanks{We should also like to thank Green Caff\`e Nero, Pi\c{e}kna 18, Warsaw
for providing a very good working environment and quick access to excellent
coffee.}

\maketitle

\setcounter{section}{-1}
\section{Introduction}\label{introduction}
Let $M$ be a smooth $n$-manifold. A {\em projective structure\/} on $M$ is an
equivalence class of torsion-free connections on the tangent bundle~$TM$, where
two connections are said to be equivalent if and only if locally they have the
same geodesics as unparameterised curves. The study of such structures is
called {\em projective differential geometry\/}. It is a simple example of a
general class of structures called {\em parabolic differential
geometries\/}~\cite{parabook}. One can ask whether there is a Riemannian metric
on $M$ such that the geodesics of this metric are the geodesics of the
projective structure. It is a classical question known as the 
{\em metrisability problem\/} for projective structures. It was already known
in 1889 that there are obstructions in two dimensions:
R.~Liouville~\cite{Liouville} showed that the problem is governed by an
overdetermined linear system of partial differential equations. The
two-dimensional case is completely solved in~\cite{BDE}. In all dimensions, the
metrisability problem is governed by an overdetermined linear
operator~\cite{EM}. It is a first operator from the projective
Bernstein-Gelfand-Gelfand sequence~\cite{CD,CSS}, specifically
$$\sigma^{bc}\longmapsto
\mbox{the trace-free part of }\nabla_a\sigma^{bc},$$
where $\sigma^{bc}$ is a symmetric tensor of projective weight~$-2$.
Metrisability is obstructed in all dimensions~\cite{BDE} and explicit
obstructions in three dimensions are given in~\cite{DE}.

A {\em conformal structure\/} on $M$ is an equivalence class of Riemannian
metrics on~$M$, where two such metrics $g_{ab}$ and $\widehat{g}_{ab}$ are
equivalent if and only if $\widehat{g}_{ab}=\Omega^2g_{ab}$ for some positive
smooth function~$\Omega$. For $n\geq3$, as we shall henceforth suppose, it is
another example of a parabolic differential geometry. Of course, in this case
there is no question that there are metrics underlying the structure. But there
are special curves in conformal geometry that are invariantly defined by the
structure, and which play the r\^{o}le of the unparameterised geodesics in
projective geometry. These are the {\em conformal circles\/}~\cite{BE,T,Y},
here to be regarded as unparameterised curves. Unlike the geodesics of a
projective structure, they are defined by a higher order jet at any one point:
in a way that will soon be made precise, one needs to know both an initial
direction and acceleration to determine a conformal circle. Nevertheless, we
can ask the question of whether there is a metric in the conformal class so
that all its geodesics are conformal circles. We shall find that this question
is equivalent to the classical question of whether the conformal structure
admits an Einstein representative. So this is a non-trivial question only for
$n\geq4$ with complex projective space ${\mathbb{CP}}_2$, equipped with its
Fubini-Study metric, providing a good example where all geodesics are conformal
circles. The existence of an Einstein metric in a given conformal class is also
governed by a first BGG operator, specifically
$$\sigma\longmapsto
\mbox{the trace-free part of }(\nabla_a\nabla_b\sigma+\Rho_{ab}\sigma),$$
where $\sigma$ has conformal weight $1$ and $\Rho_{ab}$ is the Schouten
tensor~(\ref{schouten}). The na\"{\i}ve restriction on a positive smooth
function $\Omega$ such that $\Omega^2g_{ab}$ is Einstein is nonlinear and was
given by Brinkmann~\cite[Eq.~(2.26)]{B} but LeBrun~\cite[p.~558]{LeB} observes
that the equation on $\sigma\equiv\Omega^{-1}$ is linear.

The conformal circles on the round $n$-sphere are the genuine round circles 
for the unit sphere $S^n\subset{\mathbb{R}}^{n+1}$ sliced by any plane 
in~${\mathbb{R}}^{n+1}$, not necessarily through the origin. A more congenial 
viewpoint is to regard the round $n$-sphere inside ${\mathbb{RP}}_{n+1}$ 
according to
\begin{equation}\label{flat_model_conformal}
S^n=\{[x^0,x^1,\cdots,x^n,x^\infty]\mbox{ s.t.\ }
2x^0x^\infty=(x^1)^2+(x^2)^2\cdots+(x^n)^2\}.\end{equation}
Its main feature is that the evident action of ${\mathrm{SO}}(n+1,1)$ is by
conformal transformations and that all conformal transformations of $S^n$ arise
in this way (see, for example,~\cite{EG} for details). In this model, the
conformal circles are the intersections of $S^n$ with projective
planes~${\mathbb{RP}}_2$, linearly embedded in~${\mathbb{RP}}_{n+1}$.
$$\setlength{\unitlength}{1.5pt}\begin{picture}(40,90)(0,-10)
\linethickness{.4pt}
\qbezier (37.32,55) (34.82,59.33) (17.50,49.33)
\qbezier (17.50,49.33) (0.18,39.33) (2.68,35)
\linethickness{1.3pt}
\qbezier (2.68,35) (5.18,30.67) (22.5,40.67)
\qbezier (22.5,40.67) (39.82,50.67) (37.32,55)
\put(12.3,-11.7){\setlength{\unitlength}{.8cm}
\begin{picture}(5,4.6)(-1,-2.6)
\put(-1.2,-1.3){$S^n$}
\linethickness{.7pt}
 \qbezier(1.9,0.0)(1.9,0.787)(1.3435,1.3435)
 \qbezier(1.3435,1.3435)(0.787,1.9)(0.0,1.9)
 \qbezier(0.0,1.9)(-0.787,1.9)(-1.3435,1.3435)
 \qbezier(-1.3435,1.3435)(-1.9,0.787)(-1.9,0.0)
 \qbezier(-1.9,0.0)(-1.9,-0.787)(-1.3435,-1.3435)
 \qbezier(-1.3435,-1.3435)(-0.787,-1.9)(0.0,-1.9)
 \qbezier(0.0,-1.9)(0.787,-1.9)(1.3435,-1.3435)
 \qbezier(1.3435,-1.3435)(1.9,-0.787)(1.9,0.0)
\end{picture}}
\thicklines
\put(-15,10){\line(5,3){80}}
\put(-15,10){\line(1,4){6}}
\put(-9,34){\line(5,3){11}}
\put(43.4,65.2){\line(-5,-3){12}}
\put(65,58){\line(-3,1){21.6}}
\put(-30,-6){\line(1,0){120}}
\put(-30,-6){\line(0,1){75}}
\put(-30,69){\line(1,0){120}}
\put(90,-6){\line(0,1){75}}
\put(90,-6){\line(2,1){15}}
\put(-30,69){\line(2,1){15}}
\put(90,69){\line(2,1){15}}
\put(66,0){${\mathbb{RP}}_{n+1}$}
\put(-25,33){${\mathbb{RP}}_2$}
\end{picture}$$
In particular, it is clear that the great circles play no special r\^{o}le and 
that the space of conformal circles on $S^n$ may be identified as an open 
subset of~${\mathrm{Gr}}_3({\mathbb{R}}^{n+2})$, thus having 
dimension~$3(n-1)$. More specifically, there are three open orbits for the 
action of ${\mathrm{SO}}(n+1,1)$ on ${\mathrm{Gr}}_3({\mathbb{R}}^{n+2})$ 
according to whether the corresponding plane in  
${\mathbb{RP}}_{n+1}$ misses, touches, or slices the conformal sphere 
$S^n\hookrightarrow{\mathbb{RP}}_{n+1}$. It slices if and only if the 
quadratic form in (\ref{flat_model_conformal}) restricts to be indefinite on 
the corresponding $3$-plane in ${\mathbb{R}}^{n+2}$. The moduli space of 
conformal circles is, therefore, a homogeneous space: 
$${\mathrm{SO}}(n+1,1)/
{\mathrm{S}}({\mathrm{O}}(n-1)\times{\mathrm{O}}(2,1)).$$

The corresponding `flat model' for CR geometry arises by considering the action
of ${\mathrm{SU}}(n+1,1)$ on ${\mathbb{CP}}_{n+1}$. The unique closed orbit is
a sphere of real dimension $2n+1$, which we shall realise in the form
$$S^{2n+1}=\{[z^0,z^1,\cdots,z^n,z^\infty]\mbox{ s.t.\ }
z^0\overline{z^\infty}+z^\infty\overline{z^0}
=|z^1|^2+|z^2|^2\cdots+|z^n|^2\}.$$
${\mathrm{SU}}(n+1,1)$ acts by CR automorphisms and all such automorphisms
arise in this way (see, for example,~\cite{EG_CR} for details). Now there are
two possible geometric constructions of special curves in~S$^{2n+1}$. The first
is to intersect $S^{2n+1}$ with a linearly embedded
${\mathbb{CP}}_1\hookrightarrow{\mathbb{CP}}_{n+1}$. As verified by
Jacobowitz~\cite{J}, this gives rise to the {\em chains\/} on $S^{2n+1}$, as
defined in general CR geometry by Chern and Moser~\cite{CM}. Thus, in the flat
model, the chains may be identified as an open subset of
${\mathrm{Gr}}_2({\mathbb{C}}^{n+2})$, therefore having real
dimension $4n$ (see also~\cite{CZ}). A CR manifold $M$ is, in particular, a
contact manifold and chains are everywhere transverse to the contact
distribution $H\to M$.
There is, however, another class of `distinguished curves' in CR geometry, 
which are everywhere tangent to~$H$. To see them in the flat model, consider the 
embedding
$${\mathbb{RP}}_{n+1}\hookrightarrow{\mathbb{CP}}_{n+1}$$
induced by the inclusion ${\mathbb{R}}^{n+2}\subset{\mathbb{C}}^{n+2}$. It is
one of the two orbits for the action of ${\mathrm{SL}}(n+2,{\mathbb{R}})$ on
${\mathbb{CP}}_{n+1}$ and its intersection with the CR sphere $S^{2n+1}$ is 
the conformal sphere~$S^n$, as in~(\ref{flat_model_conformal}). Thus, we have
$$\begin{array}{ccc}{\mathbb{RP}}_{n+1}&\subset&{\mathbb{CP}}_{n+1}\\[4pt]
\cup&&\cup\\[4pt]
S^n&\subset&S^{2n+1}
\end{array}$$
and it is easily verified that $S^n$ is everywhere tangent to the CR contact
distribution $H\to S^{2n+1}$. The conformal circles in $S^n$ give distinguished
curves in $S^{2n+1}$, which may be intrinsically defined just in terms of the
CR structure on~$S^{2n+1}$, the full set of such curves being obtained under
the action of ${\mathrm{SU}}(n+1,1)$. On a general strictly-pseudoconvex CR
manifold, using the terminology of~\cite[Remark~5.3.8]{parabook}, these are the
`distinguished curves of type~(c).' We shall come back to their definition 
later in this article. Since they will be a main object of study in this 
article, we shall refer to these curves as `contact distinguished.' In the 
flat model, as above, they are closely related to conformal circles and, in 
particular, for the usual round metric on $S^{2n+1}$, they are circles. In any 
case, the moduli space of contact distinguished curves in the flat model 
is a homogeneous space ${\mathrm{SU}}(n+1,1)/S$, where
\begin{equation}\label{this_is_S}
S=\left\{C=e^{i\theta}\!\left[\begin{array}{cc}A&0\\ 0& B\end{array}\right]
\mbox{\ s.t.\ }\begin{array}{l}A\in{\mathrm{U}}(n-1)\\
B\in{\mathrm{O}}(2,1)\end{array}\enskip\det C=1\right\}\end{equation}
It has real dimension
\begin{equation}\label{real_dimension}
\big((n+2)^2-1\big)-\big(1+(n-1)^2+3-1\big)=6n-1.\end{equation}

Contact distinguished curves can be also interesting from the viewpoint of
control theory. Manifolds $M$ underlying a parabolic geometry always carry
canonical filtrations (possibly trivial, as in the conformal case) where
$T^{-1}\!M$ is a bracket-generating distribution in $TM$. Viewing the
distribution $T^{-1}\!M$ as a control distribution for a suitable system, the
bracket-generating property means that the system is {\em controllable},
i.e.~for arbitrary point $x$, each point in a neighbourhood of $x$ can be
joined with $x$ by a curve contact to $T^{-1}\!M$. The {\em contact
distinguished\/} curves are natural candidates for controlling the system
(distinguished curves have the important property that if they are contact at
one point then they are contact everywhere). On the other hand, they are
generally determined by higher jets, so there can be more distinguished curves
going in a given direction from a given point $x$. To describe a suitable
family of such curves it is therefore natural to ask if there is a connection
such that all its contact geodesics are distinguished.

In this article we set up some general machinery within parabolic geometry and
illustrate our procedures by treating, in detail, conformal geometry and
contact Legendrean geometry (an alternative real form of hypersurface CR
geometry). The conformal case is already understood by other methods
(e.g.~\cite{BE,T,Y}), which we briefly recall before using symmetry algebras
and the tractor connection to rederive the conformal results in a way that
generalises to all parabolic geometries. The key here is the characterisation
of unparameterised distinguished curves, due to Doubrov and
\v{Z}\'adn\'{\i}k~\cite{DZ}, in terms of the Cartan connection. We rephrase
their characterisation in terms of tractor connections (and, in an appendix,
derive the equations of Tod~\cite{T} for conformal circles by tractor methods).
An important point about such methods is that they are manifestly invariant.
Otherwise, one has to check the invariance of the resulting equations by
tedious and unilluminating calculation. In fact, we do not need the equations
themselves in order to understand when (contact) geodesics for a Weyl
connection within a parabolic class are distinguished. Much of this article is
devoted to carrying out this procedure in the context of contact Legendrean
geometry, culminating in Theorem~\ref{contact_Legendrean_theorem} and its
corollary. There are corresponding results in the CR setting, namely
Theorem~\ref{CR_theorem} and its corollary. In both cases, it is interesting to
note that there are non-trivial examples, which may be gleaned from articles of
Loboda~\cite{L} and, more recently, Kruglikov~\cite{K} and
Doubrov-Medvedev-The~\cite{DMT,DMT_CR}. In the CR setting, the geometry we 
identify was already considered by Leitner~\cite{Leitner} who explains how to 
derive them from K\"ahler-Einstein metrics.

\subsection*{Acknowledgements}
We would like to thank Katharina Neusser for most helpful discussions leading
to our deciphering the conformal circle equation via standard tractors. 

We would also like to acknowledge that our understanding of contact Legendrean
geometry was significantly enriched by a joint project of the second author
with Gianni Manno, Katja Sagerschnig, and Josef \v{S}ilhan. In particular, the 
calculations behind the examples of \S\ref{contact_legendrean_examples} use 
formul{\ae} and {\sc Maple} programs derived in this project.

Conversations with Rod Gover were very useful in relating our results to the 
work of Leitner~\cite{Leitner} and \v{C}ap-Gover~\cite{CG}.

Finally, we thank the referee for several useful suggestions.

\section{Conformal geometry}\label{conformal_geometry}
Fix a Riemannian conformal structure on~$M$. We need to define conformal
circles on $M$ as unparameterised curves. Since it is our aim to compare these
curves with metric geodesics, it is convenient to follow Tod~\cite{T} in
selecting a {\em background metric\/} $g_{ab}$ from the conformal class and
write all differential equations with respect to this metric and its
Levi-Civita connection $\nabla_a$, whilst checking that these equations
do not depend on our choice of~$g_{ab}$.

Let $\gamma\hookrightarrow M$ be a smooth oriented, embedded curve. In the
presence of~$g_{ab}$, we shall write $U^a$ for the unique unit length vector
field defined along and tangent to~$\gamma$ in the direction of its
orientation. Evidently, if $g_{ab}$ is replaced by
$\widehat{g}_{ab}=\Omega^2g_{ab}$, then $U^a$ is replaced by
$\widehat{U}^a=\Omega^{-1}U^a$. We say that $U^a$ has {\em conformal
weight\/}~$-1$, equivalently that $U_a\equiv g_{ab}U^b$ has conformal
weight~$1$. Again in the presence of a metric~$g_{ab}$, let $\partial\equiv
U^a\nabla_a$ denote the {\em directional derivative\/} along~$\gamma$, where
$\nabla_a$ is the Levi-Civita connection of~$g_{ab}$. The directional
derivative may be applied to any tensor defined along $\gamma$ and, in
particular, to the vector field~$U^a$. For a given metric $g_{ab}$, we shall
refer to $U^a$ as the {\em velocity\/} along $\gamma$ and 
$$C^a\equiv\partial U^a\quad\mbox{as the {\em acceleration} along}~\gamma.$$
Then $\gamma$ is a {\em geodesic\/} for $g_{ab}$ if and only if~$C^a\equiv 0$.
Notice that, along any curve, since $0=\partial(U^aU_a)=2U^aC_a$, the
acceleration is orthogonal to the velocity. 
\begin{lemma}\label{ode} If
$\omega$ is a smooth $1$-form on $M$, then locally it is always possible to
find a smooth function $f$ on $M$ such that $\omega=df$ along~$\gamma$.
\end{lemma}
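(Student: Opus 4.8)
The plan is to reduce the statement to a standard fact about prescribing the $1$-jet of a function along a curve. Since $\gamma\hookrightarrow M$ is an embedded curve, we may work in a tubular neighbourhood and choose local coordinates $(t, x^1,\dots,x^{n-1})$ in which $\gamma=\{x^1=\cdots=x^{n-1}=0\}$ and $t$ is a parameter along $\gamma$. In these coordinates write $\omega=\omega_0\,dt+\sum_{i}\omega_i\,dx^i$, where the $\omega_0,\omega_i$ are smooth functions of all the coordinates.

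First I would produce a function $f$ whose value and whose differential agree with the prescribed data on $\gamma$. For the values along $\gamma$, set $g(t)\equiv\int_0^t \omega_0(s,0,\dots,0)\,ds$, so that $g'(t)=\omega_0(t,0,\dots,0)$; this handles the $dt$-component of $\omega|_\gamma$. Then define
$$f(t,x^1,\dots,x^{n-1})\equiv g(t)+\sum_{i=1}^{n-1}\omega_i(t,0,\dots,0)\,x^i.$$
This $f$ is manifestly smooth. Restricting to $\gamma$, i.e.\ setting all $x^i=0$, one computes $df=g'(t)\,dt+\sum_i\omega_i(t,0,\dots,0)\,dx^i=\omega_0(t,0,\dots,0)\,dt+\sum_i\omega_i(t,0,\dots,0)\,dx^i$, which is exactly $\omega$ evaluated along $\gamma$. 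Hence $\omega=df$ along $\gamma$, as required.

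There is essentially no obstacle here: the only point worth noting is that "$\omega=df$ along $\gamma$" is a pointwise condition on the $1$-form at points of $\gamma$ — it constrains not just the tangential derivative of $f$ but all of $df$ at those points — so the construction must pin down both $g'(t)$ (the tangential part) and the transverse coefficients $\partial f/\partial x^i$ along $\gamma$; the displayed formula does precisely this, the linear term in the $x^i$ being chosen to get the transverse derivatives right while contributing nothing on $\gamma$ itself. Finally, a partition-of-unity argument extends $f$ from the tubular neighbourhood to all of $M$ if a globally defined function is wanted, though for the intended application only the local statement is needed.
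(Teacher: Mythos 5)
Your proof is correct and is essentially the same as the paper's: the paper writes out the two-dimensional case with $f(x,y)=\int_\gamma\alpha(x)\,dx+\beta(x)\,y$ and notes the general case is similar, while you have simply written out that general case explicitly. No issues.
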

\begin{proof} In $2$-dimensions we may choose local co\"ordinates $(x,y)$ so
that $\gamma$ is the $x$-axis. Then
$$\omega|_\gamma=\alpha(x)\,dx+\beta(x)\,dy$$
and we may take
$$f(x,y)=\int_\gamma\alpha(x)\,dx+\beta(x)\,y.$$
Away from a tubular neighbourhood of~$\gamma$, we may extend $f$
arbitrarily by a partition of unity. The $n$-dimensional case is similar.
\end{proof}
\begin{lemma}\label{find_metric} Locally, it
is always possible to find a metric in the given
conformal class for which $\gamma$ is a geodesic.
\end{lemma}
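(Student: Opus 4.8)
The plan is to exploit the conformal freedom directly: write the prospective metric as $\widehat{g}_{ab}=\Omega^2g_{ab}$ for a positive smooth function $\Omega$ and arrange that the acceleration of $\gamma$ with respect to the Levi-Civita connection of $\widehat{g}_{ab}$ vanishes along~$\gamma$. First I would record the standard conformal transformation rule for the Levi-Civita connection, namely $\widehat\nabla_aX^b=\nabla_aX^b+\Upsilon_aX^b+\Upsilon_cX^c\delta_a{}^b-X_a\Upsilon^b$ with $\Upsilon_a\equiv\Omega^{-1}\nabla_a\Omega$, together with the observation that the $\widehat{g}$-unit velocity along $\gamma$ is $\widehat{U}^a=\Omega^{-1}U^a$. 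A short computation, being careful that both the connection and the normalisation of the velocity change, then gives the new acceleration
$$\widehat{C}^a=\Omega^{-2}\big(C^a+(U^b\Upsilon_b)U^a\big)-\Upsilon^a,$$
where $\Upsilon^a\equiv g^{ab}\Upsilon_b$.

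Next I would observe that it suffices to choose $\Omega$ so that $\Omega\equiv1$ along $\gamma$ and $\nabla_a\Omega=C_a$ along $\gamma$, where $C_a\equiv g_{ab}C^b$. Indeed, if $\Omega|_\gamma=1$ then $\Upsilon_a|_\gamma=\nabla_a\Omega|_\gamma=C_a$, while $U^b\Upsilon_b|_\gamma=\partial\log\Omega|_\gamma=0$ since $\log\Omega$ is constant along~$\gamma$; substituting into the displayed formula gives $\widehat{C}^a|_\gamma=C^a-g^{ab}C_b=0$, so $\gamma$ is a geodesic of~$\widehat{g}$. This prescription is consistent precisely because the acceleration is orthogonal to the velocity, $U^aC_a=0$, so the normal condition $\nabla_a\Omega=C_a$ and the tangential condition $\Omega|_\gamma=1$ do not conflict.

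Finally I would produce such an $\Omega$ using Lemma~\ref{ode}. Extend the $1$-form $C_a$, which is defined along~$\gamma$, arbitrarily to a smooth $1$-form $\omega$ on a neighbourhood of~$\gamma$, and apply Lemma~\ref{ode} to obtain a smooth function $f$ with $df=\omega$, hence $\nabla_af=C_a$, along~$\gamma$. Then $U^a\nabla_af=U^aC_a=0$ along~$\gamma$, so $f$ is constant there; subtracting that constant (working locally, so $\gamma$ may be taken connected) we may assume $f|_\gamma=0$ without changing $df|_\gamma$. Setting $\Omega=e^f$ yields a positive smooth function with $\Omega|_\gamma=1$ and $\nabla_a\Omega|_\gamma=\nabla_af|_\gamma=C_a$, as required, and $\widehat{g}_{ab}=\Omega^2g_{ab}$ then has $\gamma$ as a geodesic.

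I expect the only genuine obstacle to be the bookkeeping in deriving the displayed formula for $\widehat{C}^a$ — keeping track of the conformal weights and of the simultaneous change of connection and of velocity normalisation — whereas the existence of a conformal factor with the prescribed $1$-jet along $\gamma$ is handled cleanly by Lemma~\ref{ode} once the orthogonality $U^aC_a=0$ is noted.
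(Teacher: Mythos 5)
Your proof is correct and follows essentially the same route as the paper's: choose $\Upsilon_a=\nabla_a\log\Omega$ equal to $C_a$ along $\gamma$ via Lemma~\ref{ode} and check that the conformally rescaled acceleration then vanishes (the paper works with the $1$-form $\widehat{U}_b$ and kills the residual tangential term by orthogonality of acceleration to velocity rather than by your extra normalisation $\Omega|_\gamma=1$, but this is cosmetic). One bookkeeping slip: in your displayed formula the term $-\Upsilon^a$ should sit inside the $\Omega^{-2}$ factor, i.e.\ $\widehat{C}^a=\Omega^{-2}\bigl(C^a+(U^b\Upsilon_b)U^a-\Upsilon^a\bigr)$ with $\Upsilon^a=g^{ab}\Upsilon_b$; since you arrange $\Omega\equiv1$ along $\gamma$, this does not affect the conclusion.
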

\begin{proof}
If we replace $g_{ab}$ by $\widehat{g}_{ab}=\Omega^2g_{ab}$, then the
Levi-Civita connection $\nabla_a$ acting on an arbitrary $1$-form $\phi_b$, is
replaced by $\widehat{\nabla}_a$ acting on $\phi_b$ according to the formula
$$\widehat\nabla_a\phi_b=\nabla_a\phi_b
-\Upsilon_a\phi_b-\Upsilon_b\phi_a+\Upsilon^c\phi_cg_{ab},$$
where $\Upsilon_a=\nabla_a\log\Omega$. 
Therefore, 
$$\widehat{C}_b=\widehat{U}^a\widehat\nabla_a\widehat{U}_b
=\Omega^{-1}U^a\widehat\nabla_a(\Omega U_b)
=C_b-\Upsilon_b+U^c\Upsilon_cU_b$$
and we aim to enforce $\widehat{C}_b=0$ along $\gamma$ by a suitable choice
of~$\Omega$. By Lemma~\ref{ode} we may choose $\Omega$ positive so that 
$\nabla_a\log\Omega=C_a$ along~$\gamma$. Then $\widehat{C}_b=U^c\Upsilon_cU_b$
along $\gamma$. But $\widehat{C}^b$ is also orthogonal to $\gamma$ and 
hence must vanish.
\end{proof}
We shall define the Riemann curvature tensor $R_{abcd}$ by
$$(\nabla_a\nabla_b-\nabla_b\nabla_a)X^c=R_{ab}{}^c{}_dX^d,$$
using the metric to lower the index $c$ in the usual way. The Schouten tensor
is the symmetric tensor $\Rho_{ab}$ such that
\begin{equation}\label{schouten}R_{abcd}
=W_{abcd}+\Rho_{ac}g_{bd}-\Rho_{bc}g_{ad}-\Rho_{ad}g_{bc}+\Rho_{bd}g_{ac}
\end{equation}
where $W_{ab}{}^a{}_d=0$. 
Following Tod~\cite{T}, we say that an unparameterised curve
$\gamma\hookrightarrow M$ is a {\em conformal circle\/} if and only if
\begin{equation}\label{cc}
\partial C_a=\Rho_{ab}U^b-(C^bC_b+\Rho_{bc}U^bU^c)U_a\end{equation}
along~$\gamma$. Defined this way, it requires a computation to check conformal 
invariance: use, for example, the formul{\ae} in~\cite{E} and that
$$\textstyle\widehat\Rho_{ab}=\Rho_{ab}
-\nabla_a\Upsilon_b+\Upsilon_a\Upsilon_b-\frac12\Upsilon^c\Upsilon_cg_{ab}.$$
In an appendix, however, we shall derive (\ref{cc}) by tractor calculus,
thereby ensuring its conformal invariance. Later in this section, we shall use
tractor calculus to avoid the general form of~(\ref{cc}), deriving this
equation only when it so happens that $\gamma$ is a geodesic. This special case
is all we need in order to prove the following.
\begin{thm}\label{one}
A curve $\gamma\hookrightarrow M$ is a conformal circle if and only
if there is a metric $g_{ab}$ in the conformal class for which $\gamma$ is a
geodesic and so that\/ $\Rho_{ab}U^b=\Rho_{bc}U^bU^cU_a$ along~$\gamma$.
\end{thm}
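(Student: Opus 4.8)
The plan is to prove both directions by exploiting the freedom to rescale the metric within the conformal class and the observation already made in the text that the conformal circle equation~(\ref{cc}) simplifies dramatically once $\gamma$ is a geodesic.

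\emph{The ``only if'' direction.} Suppose $\gamma$ is a conformal circle, so (\ref{cc}) holds with respect to some (hence any) metric in the class. By Lemma~\ref{find_metric} we may choose a metric $g_{ab}$ in the conformal class for which $\gamma$ is a geodesic, i.e.\ $C^a\equiv0$ along~$\gamma$. Substituting $C_a\equiv0$ and $\partial C_a\equiv 0$ into~(\ref{cc}) — using that conformal invariance of~(\ref{cc}) has been (or will be) established, so the equation is still valid in the new scale — gives $0=\Rho_{ab}U^b-(\Rho_{bc}U^bU^c)U_a$ along~$\gamma$, which is exactly the asserted condition. The only subtlety is to be sure we are entitled to use~(\ref{cc}) in the rescaled metric; this is the conformal invariance of the conformal circle notion, which the text addresses, so I would simply cite it.

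\emph{The ``if'' direction.} Conversely, suppose there is a metric $g_{ab}$ in the conformal class such that $\gamma$ is a geodesic for $g_{ab}$ and $\Rho_{ab}U^b=(\Rho_{bc}U^bU^c)U_a$ along~$\gamma$. We must show $\gamma$ is a conformal circle, i.e.\ that~(\ref{cc}) holds. Working with this particular metric, $C_a\equiv0$ along~$\gamma$, so $C^bC_b\equiv0$ and the right-hand side of~(\ref{cc}) becomes $\Rho_{ab}U^b-(\Rho_{bc}U^bU^c)U_a$, which vanishes by hypothesis; the left-hand side is $\partial C_a$, which also vanishes since $C_a\equiv0$ along~$\gamma$ (note $\partial$ is the directional derivative \emph{along}~$\gamma$, so it only sees the restriction of $C_a$ to~$\gamma$, which is identically zero). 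Hence~(\ref{cc}) holds in this scale, and by its conformal invariance it holds in every scale, so $\gamma$ is a conformal circle.

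\emph{Main obstacle.} The mathematical content is light once conformal invariance of~(\ref{cc}) is granted; the one place to be careful is the ``if'' direction, where one must check that $C_a\equiv 0$ \emph{along}~$\gamma$ (rather than on a neighbourhood) already forces $\partial C_a\equiv 0$ along~$\gamma$ — this is fine because $\partial=U^a\nabla_a$ differentiates only in the direction of~$\gamma$. The other point worth flagging is that the hypothesis in the ``if'' direction is a condition purely along~$\gamma$, matching the fact that~(\ref{cc}) is likewise an equation along~$\gamma$; no extension off~$\gamma$ is needed. If one wanted to avoid invoking the general conformal invariance of~(\ref{cc}), one could instead derive~(\ref{cc}) in the geodesic scale directly by tractor calculus, as the text promises to do later — but for the purposes of this theorem the argument above suffices.
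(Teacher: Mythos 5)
Your proposal is correct and follows essentially the same route as the paper: both directions reduce to substituting $C_a\equiv0$ into~(\ref{cc}) after invoking Lemma~\ref{find_metric} and the conformal invariance of~(\ref{cc}). Your extra remarks (that $\partial C_a$ vanishes because $\partial$ only differentiates along~$\gamma$, and that invariance of~(\ref{cc}) must be granted) are correct elaborations of points the paper's terser proof leaves implicit.
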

\begin{proof}
If $\gamma$ is a geodesic and $\Rho_{ab}U^b=\Rho_{bc}U^bU^cU_a$ along~$\gamma$,
then (\ref{cc}) is satisfied and $\gamma$ is a conformal circle. Conversely, if
we choose by Lemma~\ref{find_metric} a metric in the conformal class for which
$\gamma$ is a geodesic, then (\ref{cc}) implies that
$\Rho_{ab}U^b=\Rho_{bc}U^bU^cU_a$, as required.\end{proof}
\begin{cor}
The geodesics of a Riemannian metric are all conformal circles if and 
only if the metric is Einstein.
\end{cor}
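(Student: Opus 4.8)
The plan is to use Theorem~\ref{one} directly. Suppose first that $g_{ab}$ is Einstein, meaning $\Rho_{ab}=\lambda g_{ab}$ for some constant (or at least function) $\lambda$. Then for any geodesic $\gamma$ of $g_{ab}$ with unit velocity $U^a$ we have $\Rho_{ab}U^b=\lambda U_a$ and $\Rho_{bc}U^bU^c=\lambda$, so that $\Rho_{bc}U^bU^cU_a=\lambda U_a=\Rho_{ab}U^b$ along~$\gamma$. Hence the hypotheses of Theorem~\ref{one} are met (with the Einstein metric itself serving as the metric in the conformal class), and $\gamma$ is a conformal circle. Since $\gamma$ was an arbitrary geodesic, all geodesics of an Einstein metric are conformal circles.

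Conversely, suppose $g_{ab}$ is a metric all of whose geodesics are conformal circles. Fix a point $p\in M$ and a unit vector $U^a$ at~$p$, and let $\gamma$ be the geodesic through $p$ with velocity~$U^a$. Since $\gamma$ is both a geodesic of $g_{ab}$ and a conformal circle, Theorem~\ref{one} (applied with the metric $g_{ab}$ itself, which already makes $\gamma$ a geodesic) gives $\Rho_{ab}U^b=\Rho_{bc}U^bU^cU_a$ at~$p$. In other words, writing $Q_a\equiv\Rho_{ab}U^b-(\Rho_{bc}U^bU^c)U_a$, we have $Q_a=0$. The vector $Q_a$ is precisely the component of $\Rho_{ab}U^b$ orthogonal to~$U^a$; vanishing of this component for every unit $U^a$ at $p$ forces the symmetric endomorphism $\Rho_a{}^b$ to act as a scalar on $T_pM$, i.e.\ $\Rho_{ab}=\lambda(p)g_{ab}$ at~$p$. (A symmetric bilinear form whose restriction to every tangent line agrees, after projection, with a multiple of $g_{ab}$ is a pointwise multiple of $g_{ab}$: pick an orthonormal eigenbasis of $\Rho_{ab}$ relative to $g_{ab}$; if two eigenvalues differed, a generic unit combination of the two eigenvectors would have $\Rho_{ab}U^b$ not proportional to $U_a$.) Since $p$ was arbitrary, $\Rho_{ab}=\lambda g_{ab}$ on all of~$M$. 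Finally, the contracted Bianchi identity (equivalently, the fact that $\Rho_{ab}$ is a Codazzi-type tensor so that $\nabla^a\Rho_{ab}=\nabla_b(\Rho_a{}^a)$ up to the standard normalisation) shows that $\lambda$ is automatically constant when $n\geq3$; hence $g_{ab}$ is Einstein.

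The only mildly delicate point is the linear-algebra step turning ``orthogonal projection of $\Rho_{ab}U^b$ vanishes for all unit $U$'' into ``$\Rho_{ab}$ is pure trace,'' and then invoking the Schur-type argument to upgrade the proportionality function $\lambda$ to a constant. Neither is hard: the first is the eigenbasis observation above, and the second is the standard consequence of the Bianchi identity in dimension $n\geq3$ (which is in force throughout this section). Everything else is an immediate application of Theorem~\ref{one}, so this corollary is genuinely a corollary.
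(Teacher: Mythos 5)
Your proof is correct and follows essentially the same route as the paper: both directions reduce to the observation that the condition $\Rho_{ab}U^b=\Rho_{bc}U^bU^cU_a$ says precisely that $U^a$ is an eigenvector of the endomorphism $\Rho_a{}^b$, and that every unit vector is an eigenvector if and only if $\Rho_a{}^b=\lambda\delta_a{}^b$. The paper's proof is just a terser version of your eigenbasis argument, and your closing remark that $\lambda$ is automatically constant (Schur, $n\geq3$) is a harmless addition the paper omits.
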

\begin{proof} To say that $\Rho_{ab}U^b=\Rho_{bc}U^bU^cU_a$ is exactly to say
that $U_b$ is an eigenvector of the endomorphism $\Rho_a{}^b$ (since if
$\Rho_a{}^bU_b=\lambda U_a$, then it must be that $\lambda=U^a\Rho_a{}^bU_b
=\Rho_{bc}U^bU^c$).
All vectors are eigenvectors if and only if $\Rho_a{}^b=\lambda\delta_a{}^b$,
where $\delta_a{}^b$ is the identity matrix.
\end{proof}
\noindent{\bf Remark}\enskip Smooth curves in a conformal manifold come
equipped with a class of preferred parameterisations, well-defined up to a
projective freedom. If we ask, not only that a geodesic $\gamma$ of a
Riemannian metric be a conformal circle, but also that its arc-length
parameterisation be preferred, then we require, in addition, that
$$\textstyle U^a\partial C_a=3(U^aC_a)^2-\frac32C^aC_a-\Rho_{ab}U^aU^b$$
and it follows that $\Rho_{ab}U^b=0$ along~$\gamma$. This is what is proved
in~\cite{BE}. We obtain a different corollary, namely that for all 
{\em arc-length parameterised\/} geodesics of a metric to be {\em projectively
parameterised\/} conformal circles, it is necessary and sufficient that the
metric be Ricci-flat. A good example in four dimensions is a Calabi-Yau metric 
on a K3 surface.

\medskip\noindent{\bf Remark}\enskip This characterisation of {\em
parameterised\/} distinguished curves holds true in any parabolic
geometry~\cite[Proposition~3.7]{Z}.

\medskip
Following Doubrov and \v{Z}\'adn\'{\i}k~\cite{DZ}, we may prove
Theorem~\ref{one} above without firstly having to derive the differential
equation~(\ref{cc}). The main advantage of this alternative proof is that it
applies to all parabolic geometries. To implement it, all one needs is the
symmetry algebra of an unparameterised curve $L$ in the flat model~$G/P$ and
the Cartan connection, regarded as a ${\mathfrak{g}}$-valued $1$-form $\omega$
on the total space of the Cartan bundle ${\mathcal{G}}\to M$. The symmetry
algebra ${\mathrm{Sym}}L$ of $L$ is defined to be a certain subalgebra of
${\mathfrak{g}}$ realised as vector fields on $G/P$ induced by the action of
$G$ on~$G/P$. Specifically,
\begin{equation}\label{SymL}
{\mathrm{Sym}}L\equiv\{X\in{\mathfrak{g}}\mid
\mbox{$X$ is tangent to $L$ at $p$ for all $p\in L$}\}.\end{equation}
It is evident that ${\mathrm{Sym}}L$ is non-trivial if and only if $L$ is
homogeneous. The notion of conformal circles generalises to any parabolic
geometry. They are called {\em distinguished curves\/} and
\cite[Proposition~1]{DZ} shows that an {\em unparameterised\/} curve
$\gamma\hookrightarrow M$ is distinguished if and only if there is a smooth
section $\gamma\to{\mathcal{G}}$ of the Cartan bundle ${\mathcal{G}}\to M$ so
that $\gamma^*\omega$ takes values in 
${\mathrm{Sym}}L\subseteq{\mathfrak{g}}$ (this is for distinguished curves 
{\em modelled\/} on $L\hookrightarrow G/P$). If preferred, one could start with 
this as the definition of distinguished curves in a Cartan geometry.

In what follows we shall unpack this characterisation of distinguished curves
in conformal geometry, supposing, for simplicity, that $\gamma\hookrightarrow
M$ is already a geodesic for a chosen metric in the conformal class.  By
Lemma~\ref{find_metric} this is no restriction on $\gamma$ and, in any case,
will evidently be sufficient in providing a route to Theorem~\ref{one}.  The
object of this exercise is to avoid (\ref{cc}) since this is more difficult to
derive.  For contact distinguished curves in CR geometry, for example, we do
not know, nor need to know, the equation corresponding to (\ref{cc}) in order
to establish, in Section~\ref{CR_geometry}, the analogue of Theorem~\ref{one}.
More precisely, our upcoming Theorem~\ref{distinguished} will soon translate
the Doubrov-\v{Z}\'adn\'{\i}k characterisation \cite{DZ} of unparameterised
distinguished curves for a general parabolic geometry into the language of
tractors.  But, to apply Theorem~\ref{distinguished}, we shall need to know the
symmetry algebra (\ref{SymL}) of a model curve in~$G/P$.  In the conformal case
it is easiest to suppose the initial acceleration vanishes.  For completeness,
in an appendix we shall use Theorem~\ref{distinguished} to derive equation
(\ref{cc}) for conformal circles in general.

When $\gamma\hookrightarrow M$ is already a geodesic, the model curve 
in~${\mathbb{R}}^n$ will be a straight line
\begin{equation}\label{line}{\mathbb{R}}\ni t\mapsto 2tU^b\end{equation}
for $U^b$ of unit length (the seemingly spurious factor of $2$ being included 
here so that this straight line can be seen as the limiting case of a 
circle~(\ref{circle}) when, in the Appendix, we include acceleration). Hence, 
the first order of business is to compute the conformal symmetry algebra 
of~(\ref{line}). The Lie algebra of conformal Killing fields on 
${\mathbb{R}}^n$ is
\begin{equation}\label{conformal_Killing_fields}\textstyle 
(X^b-F^b{}_cx^c+\lambda x^b-Y_cx^cx^b+\frac12x_cx^cY^b)
\,\partial/\partial x^b\end{equation}
for constant tensors $X^b,F^b{}_c,\lambda,Y_b$ with $F_{bc}$ skew. We see
if such a field is everywhere tangent to~(\ref{line}), arriving at the
following conclusion. 
\begin{lemma}\label{symmetry_algebra_of_a_line}
The symmetry algebra of the line \eqref{line} consists of those
fields \eqref{conformal_Killing_fields} satisfying the following linear
constraints
\begin{equation}\label{sym_alg_of_line}
X^b=fU^b\qquad F^b{}_cU^c=0\qquad Y^b=hU^b
\end{equation}
where $f$ and $h$ are arbitrary.  
\end{lemma}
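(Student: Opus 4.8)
The strategy is entirely computational: substitute the parametrised line \eqref{line} into the general conformal Killing field \eqref{conformal_Killing_fields} and demand tangency at every point, then read off the linear constraints on $X^b, F^b{}_c, \lambda, Y_b$. The point $x^c = 2tU^c$ lies on the line, so I evaluate the vector field there and require that the result be a scalar multiple of the tangent direction $U^b$. Since $U^b$ has unit length, $x_c x^c = 4t^2$ and $Y_c x^c = 2t\,Y_cU^c$ along the curve.

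First I would write out the value of the Killing field at $x^c = 2tU^c$:
\begin{equation}\label{plan_substituted}
X^b - 2t\,F^b{}_cU^c + 2t\lambda U^b - 4t^2 (Y_cU^c)U^b + 2t^2 (Y^b),
\end{equation}
where I have used $x_cx^c = 4t^2$ so that $\tfrac12 x_cx^c Y^b = 2t^2 Y^b$. Tangency to the line means \eqref{plan_substituted} must equal $\mu(t)\,U^b$ for some scalar function $\mu(t)$, for all $t$. Grouping by powers of $t$: the constant term forces $X^b$ to be proportional to $U^b$, say $X^b = fU^b$; the coefficient of $t$ forces $F^b{}_cU^c - \lambda U^b$ to be proportional to $U^b$, but contracting $F^b{}_cU^c$ with $U_b$ gives $F_{bc}U^bU^c = 0$ by skew-symmetry of $F_{bc}$, so $F^b{}_cU^c$ is both orthogonal to $U^b$ and a multiple of $U^b$, hence $F^b{}_cU^c = 0$ (and the $\lambda U^b$ piece is automatically tangent, imposing no constraint on $\lambda$); the coefficient of $t^2$ forces $Y^b - 2(Y_cU^c)U^b$ to be proportional to $U^b$, and the same orthogonality argument — $Y^b - 2(Y_cU^c)U^b$ contracted with $U_b$ is $Y_cU^c - 2Y_cU^c = -Y_cU^c$, while being a multiple of $U^b$ — shows $Y^b$ must itself be a multiple of $U^b$, say $Y^b = hU^b$. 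Conversely, if \eqref{sym_alg_of_line} holds then \eqref{plan_substituted} reduces to $(f + 2t\lambda - 4t^2h + 2t^2h)U^b = (f + 2t\lambda - 2t^2h)U^b$, which is indeed tangent to the line, so these constraints are also sufficient.

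There is no real obstacle here; the only thing to be careful about is the bookkeeping of which terms are "automatically tangent" (the $\lambda x^b$ term, being a multiple of $x^b = 2tU^b$, never contributes a constraint) versus which require the skew/orthogonality trick (the $F$ and $Y$ terms). The slightly delicate point worth stating explicitly is that for the coefficient of $t$ one uses skew-symmetry of $F_{bc}$, whereas for the coefficient of $t^2$ one uses that $Y^b - 2(Y_cU^c)U^b$ is, up to sign, the reflection of $Y^b$ in the hyperplane orthogonal to $U^b$, so it has the same length as $Y^b$ and can only be parallel to $U^b$ if $Y^b$ already was. Once these observations are in place the lemma follows immediately, and $f, h$ (and $\lambda$) remain free while $X^b, Y^b$ are pinned to the line direction and $F^b{}_c$ is constrained to annihilate $U^c$.
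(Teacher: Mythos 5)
Your computation is correct and is exactly the verification the paper has in mind: it introduces the lemma with ``we see if such a field is everywhere tangent to~\eqref{line}'' and leaves the substitution $x^c=2tU^c$, the grouping by powers of $t$, and the skew-symmetry/orthogonality argument for $F^b{}_c$ and $Y^b$ to the reader. (The paper also rederives \eqref{sym_alg_of_line} later by the algebraic filtration of Lemma~\ref{dkr_lemma}, but that is presented as an independent check, not as the proof of this lemma.)
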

Note that this symmetry algebra has dimension $\big((n-1)(n-2)/2\big)+3$, as
will remain true when we include acceleration (in the Appendix).

To unpack the Doubrov-\v{Z}\'adn\'{\i}k characterisation, we may use tractors
instead of the Cartan connection. Recall that, in the presence of a
metric~$g_{ab}$, the standard tractor connection~\cite{BEG} is given by
\begin{equation}\label{conformal_tractor_connection}
\nabla_a\left[\begin{array}c\sigma\\ \mu_b\\
\rho\end{array}\right]= \left[\begin{array}c\nabla_a\sigma-\mu_a\\
\nabla_a\mu_b+g_{ab}\rho+\Rho_{ab}\sigma\\ 
\nabla_a\rho-\Rho_a{}^b\mu_b\end{array}\right]\end{equation}
on the bundle ${\mathcal{T}}=\Wedge^0[1]+\Wedge^1[1]+\Wedge^0[-1]$ and the 
invariant inner product is
$$\left\langle 
\left[\begin{array}c\sigma\\ \mu_b\\ \rho\end{array}\right],
\left[\begin{array}c\tilde\sigma\\ \tilde\mu_b\\ \tilde\rho\end{array}\right]
\right\rangle=
\sigma\tilde\rho+\mu^b\tilde\mu_b+\rho\tilde\sigma.$$
\begin{lemma}The general $\langle\enskip,\enskip\rangle$-preserving 
endomorphism of ${\mathcal{T}}$ has the form
\begin{equation}\label{endomorphisms}
\Phi\left[\begin{array}c\sigma\\ \mu_b\\ \rho\end{array}\right]
=\left[\begin{array}c X^b\mu_b-\lambda\sigma\\
Y_b\sigma+F_b{}^c\mu_c-X_b\rho\\ 
\lambda\rho-Y^b\mu_b\end{array}\right]\end{equation}
for unweighted tensors $X^b,F^b{}_c,\lambda,Y_b$ with $F_{bc}$ skew.
\end{lemma}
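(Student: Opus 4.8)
The plan is to treat the statement as a piece of fibrewise linear algebra. Here ``$\langle\enskip,\enskip\rangle$-preserving'' is to be understood infinitesimally: $\Phi$ is an endomorphism of $\mathcal{T}$ with $\langle\Phi s,t\rangle+\langle s,\Phi t\rangle=0$ for all sections $s,t$ of~$\mathcal{T}$, i.e.\ $\Phi$ is a section of the bundle of Lie algebras $\mathfrak{so}(\mathcal{T})$ (this is the form the answer must take, since it is manifestly a linear family, and it should match the parametrisation~\eqref{conformal_Killing_fields} of $\mathfrak{so}(n+1,1)$). Since a metric $g_{ab}$ in the conformal class has been fixed, the splitting $\mathcal{T}=\Wedge^0[1]+\Wedge^1[1]+\Wedge^0[-1]$ is available, and the first step is to write $\Phi$ as a $3\times3$ array of bundle maps between the three summands. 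Each such block is contraction against a tensor field of the appropriate valence and weight, which, using $g_{ab}$ to raise and lower indices, I regard as an unweighted tensor: a function on each of the two diagonal $\Wedge^0[\pm1]$-slots, an endomorphism $F_b{}^c$ of $TM$ on the middle slot, a vector field for the slot $\Wedge^1[1]\to\Wedge^0[1]$, a one-form for $\Wedge^0[1]\to\Wedge^1[1]$, a further one-form and vector field for the slots $\Wedge^0[-1]\to\Wedge^1[1]$ and $\Wedge^1[1]\to\Wedge^0[-1]$, and two more weighted scalars for $\Wedge^0[1]\to\Wedge^0[-1]$ and $\Wedge^0[-1]\to\Wedge^0[1]$.

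The second step is to impose skew-adjointness. Because $\langle\enskip,\enskip\rangle$ is symmetric, this is equivalent to asking that the bilinear form $B(s,t):=\langle\Phi s,t\rangle$ be skew-symmetric in~$s$ and~$t$. Substituting the general $\Phi$ together with the explicit formula for $\langle\enskip,\enskip\rangle$ expresses $B(s,t)$ as a bilinear expression in the components of $s=(\sigma,\mu_b,\rho)$ and $t=(\tilde\sigma,\tilde\mu_b,\tilde\rho)$, and all the constraints then fall out on comparing the coefficients of $\sigma\tilde\sigma$, $\rho\tilde\rho$, $\sigma\tilde\rho$, $\sigma\tilde\mu_b$, $\mu_b\tilde\mu_c$ and $\mu_b\tilde\rho$ in $B(s,t)$ and in $-B(t,s)$. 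I expect this to force the two scalar blocks $\Wedge^0[1]\to\Wedge^0[-1]$ and $\Wedge^0[-1]\to\Wedge^0[1]$ to vanish, the two diagonal scalars to be $-\lambda$ and $+\lambda$ for a single function~$\lambda$, the endomorphism block to satisfy $F_{bc}=-F_{cb}$, and the remaining two off-diagonal pairs each to be governed by one tensor: $X^b$, appearing as $X^b\mu_b$ in the top slot and (after lowering the index with $g_{ab}$) as $-X_b\rho$ in the middle, and $Y_b$, appearing as $Y_b\sigma$ in the middle slot and as $-Y^b\mu_b$ in the bottom. Reassembling these is precisely~\eqref{endomorphisms}, and running the coefficient comparison in reverse gives the converse, that every $\Phi$ of that form is $\langle\enskip,\enskip\rangle$-preserving.

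I do not anticipate a genuine obstacle: the argument is routine and entirely pointwise. The one place that calls for care is the bookkeeping of density weights and index positions when identifying the nine blocks of $\Phi$ with (un)weighted tensor fields and when matching coefficients of the various monomials; in particular, the slot $\Wedge^0[-1]\to\Wedge^1[1]$ is \emph{a priori} a section of $T^*M[2]$, which becomes a section of $TM$ only once $g_{ab}$ is brought in, and it is exactly this identification that ties that block to $X^b$ and produces the entry $-X_b\rho$ rather than an independent one-form (and similarly on the $\Wedge^1[1]\to\Wedge^0[-1]$ slot). Alternatively one could argue invariantly: $\mathcal{T}$ carries a fibre inner product of signature $(n+1,1)$ and is filtered by the null line $\Wedge^0[1]$, so $\mathfrak{so}(\mathcal{T})$ has fibres $\mathfrak{so}(n+1,1)$ and \eqref{endomorphisms} is just its standard block description relative to a basis adapted to a null flag; but with the splitting already in hand the direct computation above is the quickest route, and it has the advantage of pinning down all the signs and normalisations at once.
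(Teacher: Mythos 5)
Your proposal is correct and is essentially the verification the paper has in mind: the paper's own proof is simply ``An elementary verification,'' and your block-by-block skew-adjointness computation (with the correct infinitesimal reading of ``preserving,'' matching the identification of these endomorphisms with sections of the adjoint tractor bundle with fibre $\mathfrak{so}(n+1,1)$) is exactly that verification spelled out. The weight/index bookkeeping you flag is handled correctly, so there is nothing to add.
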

\begin{proof} An elementary verification.\end{proof}
In fact, the $\langle\enskip,\enskip\rangle$-preserving endomorphisms of
${\mathcal{T}}$ are, by definition, sections of the {\em adjoint tractor
bundle\/}~${\mathcal{A}}$, a notion that makes sense in any parabolic geometry:
it is the bundle induced from the Cartan bundle ${\mathcal{G}}\to M$ by the
Adjoint representation of $G$ on~${\mathfrak{g}}$. In particular, the symmetry
algebra ${\mathrm{Sym}}L\subset{\mathfrak{g}}$ of a homogeneous curve 
$L\subset G/P$ induces a collection of preferred subspaces of ${\mathcal{A}}$,
the conjugates of ${\mathrm{Sym}}L$ (under the Adjoint action of $G$
on~${\mathcal{A}}$). Translating the Doubrov-\v{Z}\'adn\'{\i}k result~\cite{DZ}
into the language of tractors gives the following.
\begin{thm}\label{distinguished} In order that $\gamma\hookrightarrow M$ be an
unparameterised distinguished curve modelled on $L\hookrightarrow G/P$, it is
necessary and sufficient that along $\gamma$ there be a subbundle
${\mathcal{S}}\subset{\mathcal{A}}|_\gamma$ whose fibres are everywhere
conjugate to ${\mathrm{Sym}}L\subset{\mathfrak{g}}$ and such that
${\mathcal{S}}$ is preserved by the tractor connection along~$\gamma$.
\end{thm}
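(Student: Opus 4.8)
The plan is to deduce Theorem~\ref{distinguished} directly from the Doubrov--\v{Z}\'adn\'{\i}k characterisation stated just above, by translating between the Cartan picture and the tractor picture. Recall that $\gamma\hookrightarrow M$ is distinguished, modelled on $L\hookrightarrow G/P$, if and only if there is a smooth section $s\colon\gamma\to{\mathcal{G}}$ with $s^*\omega$ valued in ${\mathrm{Sym}}L\subseteq{\mathfrak{g}}$. The first step is to observe that the adjoint tractor bundle ${\mathcal{A}}$ is associated to ${\mathcal{G}}$ via the Adjoint representation, so a section $s$ as above produces, along $\gamma$, a trivialisation of ${\mathcal{A}}|_\gamma$ in which a subbundle ${\mathcal{S}}$ may be defined fibrewise to be the (constant, in this trivialisation) subspace ${\mathrm{Sym}}L\subset{\mathfrak{g}}$. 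Changing $s$ by a $P$-valued gauge transformation along $\gamma$ conjugates ${\mathrm{Sym}}L$ by the corresponding Adjoint action, so the fibres of ${\mathcal{S}}$ are intrinsically the $G$-conjugates of ${\mathrm{Sym}}L$; this is the required conjugacy condition, independent of the choice of $s$.

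The second step is to match the flatness condition. The tractor (equivalently, the Cartan) connection pulled back along $\gamma$ is computed, in the frame given by $s$, by the Maurer--Cartan form of $s^*\omega$: differentiating a section of ${\mathcal{A}}|_\gamma$ written in this frame as a ${\mathfrak{g}}$-valued function $\xi$ along $\gamma$ gives $\dot\xi+[\,s^*\omega(\dot\gamma),\xi\,]$. Hence a constant $\xi\in{\mathrm{Sym}}L$ is parallel precisely when $[\,s^*\omega(\dot\gamma),\xi\,]\in{\mathrm{Sym}}L$ for all $t$; but ${\mathrm{Sym}}L$ is a \emph{subalgebra} of ${\mathfrak{g}}$ (being the stabiliser, in ${\mathfrak{g}}$ acting by fundamental vector fields, of the submanifold $L$), and $s^*\omega(\dot\gamma)\in{\mathrm{Sym}}L$ by the Doubrov--\v{Z}\'adn\'{\i}k hypothesis, so this bracket does land in ${\mathrm{Sym}}L$. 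Therefore ${\mathcal{S}}$ is preserved by the tractor connection along $\gamma$. Conversely, given such an ${\mathcal{S}}$, one picks at one point $p\in\gamma$ a frame $s(p)\in{\mathcal{G}}_p$ carrying the fibre ${\mathcal{S}}_p$ to ${\mathrm{Sym}}L$, extends $s$ along $\gamma$ by parallel transport in the tractor connection; since ${\mathcal{S}}$ is parallel, $s$ carries ${\mathcal{S}}$ to ${\mathrm{Sym}}L$ for all $t$, and the condition that $[\,s^*\omega(\dot\gamma),{\mathrm{Sym}}L\,]\subseteq{\mathrm{Sym}}L$ together with the fact that ${\mathrm{Sym}}L$ equals its own normaliser in ${\mathfrak{g}}$ (which holds because $L$ is an orbit of the connected subgroup generated by ${\mathrm{Sym}}L$, hence its stabiliser subgroup has Lie algebra exactly the normaliser) forces $s^*\omega(\dot\gamma)\in{\mathrm{Sym}}L$, recovering the Doubrov--\v{Z}\'adn\'{\i}k condition.

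The main obstacle, and the only point needing care, is this last normaliser issue: from $[\,s^*\omega(\dot\gamma),{\mathrm{Sym}}L\,]\subseteq{\mathrm{Sym}}L$ one must conclude $s^*\omega(\dot\gamma)\in{\mathrm{Sym}}L$ rather than merely in the normaliser ${\mathfrak{n}}_{\mathfrak{g}}({\mathrm{Sym}}L)$. For the curves at issue here the symmetry algebra is the stabiliser of a \emph{connected} homogeneous curve $L=H\!\cdot\!o$ where $H\subseteq G$ is the connected subgroup with Lie algebra ${\mathrm{Sym}}L$; its normaliser-as-a-subalgebra is again the Lie algebra of the (possibly larger but still with the same identity component) stabiliser of $L$ as a subset, which acts on $L$ by $H$-equivariant diffeomorphisms and hence, being tangent to $L$, lies back inside ${\mathrm{Sym}}L$. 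I would spell this out in one short paragraph, noting that in all our examples ${\mathrm{Sym}}L$ is in fact a parabolic or reductive subalgebra that manifestly equals its own normaliser, so the point is harmless. Modulo this, the proof is the bookkeeping of steps one and two above, and I would present it as such, referring to \cite{DZ} and \cite{parabook} for the underlying Cartan-geometric facts.
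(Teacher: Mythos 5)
The paper gives no proof of this theorem: it is asserted as a direct translation of the Doubrov--\v{Z}\'adn\'{\i}k result \cite{DZ} into tractor language, so you are supplying an argument the authors omit. Your overall strategy is the intended one --- view ${\mathcal{A}}$ as ${\mathcal{G}}\times_P{\mathfrak{g}}$, compute the tractor connection in a Cartan frame $s$ as $\dot\xi+[\,s^*\omega(\dot\gamma),\xi\,]$, and use that ${\mathrm{Sym}}L$ is a Lie subalgebra --- and your forward direction is correct as written.

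The converse, however, has two genuine problems. First, ``extend $s$ along $\gamma$ by parallel transport in the tractor connection'' is not a legitimate construction: parallel transport lives on the extended principal $G$-bundle ${\mathcal{G}}\times_P G$ and does not remain inside ${\mathcal{G}}$ (an $\omega$-horizontal curve in ${\mathcal{G}}$ itself would be constant, $\omega$ being an absolute parallelism). What you actually need, and all you need, is \emph{any} section $s$ of ${\mathcal{G}}|_\gamma$ carrying each fibre ${\mathcal{S}}_{\gamma(t)}$ to ${\mathrm{Sym}}L$; such sections form a principal bundle over $\gamma$ with structure group $N_P({\mathrm{Sym}}L)$, so they exist provided the fibres are ${\mathrm{Ad}}(P)$-conjugate (not merely ${\mathrm{Ad}}(G)$-conjugate) to ${\mathrm{Sym}}L$ --- a point your step one elides. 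In such a frame, preservation of ${\mathcal{S}}$ says precisely that $s^*\omega(\dot\gamma)$ normalises ${\mathrm{Sym}}L$, with no transport of frames required. Second, your justification of self-normalisation is a non-sequitur: an element $Y$ of the normaliser generates a flow that normalises the connected group $H$ with Lie algebra ${\mathrm{Sym}}L$ and therefore permutes the $H$-orbits; it preserves $L=H\cdot o$ as a set only if $\exp(tY)\cdot o$ stays on $L$, which is not automatic, so ``being tangent to $L$'' does not follow. (In the conformal case a translation perpendicular to the model line normalises the foliation by parallel lines without being tangent to the given one; it fails to normalise ${\mathrm{Sym}}L$ for a different reason, namely its brackets with the rotations $F^b{}_c$.) Self-normalisation does hold for the symmetry algebras \eqref{sym_alg_of_line} and \eqref{full_symmetry_algebra} actually used here, but it must be verified by a bracket computation (or read off from Lemma~\ref{dkr_lemma}), not deduced from your general geometric argument.
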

\paragraph{\bf Remarks} Firstly, note that this is a manifestly invariant
formulation. Secondly, there is a canonical projection ${\mathcal{A}}\to TM$
and if $\gamma\hookrightarrow M$ is to be a distinguished curve in accordance
with Theorem~\ref{distinguished}, then the image of 
${\mathcal{S}}\subset{\mathcal{A}}|_\gamma$ in $TM|_\gamma$ is
forced to be the tangent bundle of~$\gamma$.

\medskip
In the particular case of a Riemannian manifold $M$ with metric~$g_{ab}$, 
we conclude that a {\em geodesic\/} $\gamma\hookrightarrow M$
is a conformal circle if and only if we can find a subbundle ${\mathcal{S}}$ of
the endomorphisms of ${\mathcal{T}}$ along~$\gamma$, having the form
(\ref{endomorphisms}) and such that
\begin{itemize}
\item $X^a$ is tangent to $\gamma$,
\item fibrewise, the subbundle ${\mathcal{S}}$ has the 
form~(\ref{sym_alg_of_line}),
\item ${\mathcal{S}}$ is preserved by the tractor connection $\partial\equiv 
U^a\nabla_a$ along~$\gamma$.
\end{itemize}
To proceed, it is useful to reformulate the conditions (\ref{sym_alg_of_line})
as follows. 
\begin{lemma}\label{reformulate}
In order that an endomorphism \eqref{endomorphisms}
satisfy~\eqref{sym_alg_of_line}, for some unit vector field $U^a$, it is
necessary and sufficient that
\begin{equation}\label{nec_and_suff}
\Phi\!\left[\!\begin{array}c0\\ 0\\ 1\end{array}\!\right]
\!=\!\left[\!\!\begin{array}c 0\\
-fU_b\\ 
\lambda\end{array}\!\!\right]\qquad
\Phi\left[\!\begin{array}c0\\ U_b\\ 0\end{array}\!\right]
\!=\!\left[\!\!\begin{array}c f\\
0\\ 
-h\end{array}\!\!\right]\qquad
\Phi\!\left[\!\begin{array}c 1\\ 0\\ 0\end{array}\!\right]
\!=\!\left[\!\!\begin{array}c -\lambda\\
hU_b\\ 
0\end{array}\!\!\right]\!.
\end{equation}
\end{lemma}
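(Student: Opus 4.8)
The plan is to verify the three stated conditions by direct substitution, and then show they imply and are implied by the constraints (\ref{sym_alg_of_line}), using the explicit formula (\ref{endomorphisms}) for a $\langle\enskip,\enskip\rangle$-preserving endomorphism. First I would apply $\Phi$ to the three standard basis tractors. Plugging $(\sigma,\mu_b,\rho)=(0,0,1)$ into (\ref{endomorphisms}) gives immediately $\Phi[0;0;1]=[X^b\cdot 0-\lambda\cdot 0;\,Y_b\cdot 0+F_b{}^c\cdot 0-X_b\cdot 1;\,\lambda\cdot 1-Y^b\cdot 0]=[0;-X_b;\lambda]$; likewise $(0,U_b,0)$ yields $[X^bU_b;\,F_b{}^cU_c;\,-Y^bU_b]$ and $(1,0,0)$ yields $[-\lambda;\,Y_b;\,0]$. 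So the three displayed equations in (\ref{nec_and_suff}) read exactly: $X_b=fU_b$ and $\lambda=\lambda$ (the latter vacuous); $X^bU_b=f$, $F_b{}^cU_c=0$, $Y^bU_b=h$; and $\lambda=\lambda$ (vacuous), $Y_b=hU_b$.

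Next I would check the two implications. For sufficiency, assume (\ref{sym_alg_of_line}), i.e.\ $X^b=fU^b$, $F^b{}_cU^c=0$, $Y^b=hU^b$ for some scalars $f,h$. Then $X_b=fU_b$, so the first and third equations of (\ref{nec_and_suff}) hold with these same $f,h$; and since $U^a$ is a unit vector field, $X^bU_b=fU^bU_b=f$ and $Y^bU_b=hU^bU_b=h$, while $F_b{}^cU_c=0$ by hypothesis, so the middle equation holds too. For necessity, assume (\ref{nec_and_suff}) holds for \emph{some} scalar fields $f,h$ (strictly, for some functions along $\gamma$); the first equation forces $X_b=fU_b$, hence $X^b=fU^b$; the third forces $Y_b=hU_b$, hence $Y^b=hU^b$; and the middle equation forces $F_b{}^cU_c=0$. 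These are precisely the constraints (\ref{sym_alg_of_line}). I would also remark that the scalars $f,h$ appearing in (\ref{sym_alg_of_line}) are then necessarily recovered as $f=X^bU_b$ and $h=Y^bU_b$, using the unit-length normalisation of $U^a$, so there is no ambiguity.

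This is an entirely routine computation with no real obstacle; the only point requiring a word of care is the role of the unit-length condition on $U^a$, which is what makes the scalars $f$ and $h$ in (\ref{sym_alg_of_line}) match those implicitly defined by (\ref{nec_and_suff}), and which is the reason the middle equation of (\ref{nec_and_suff}) correctly captures all three constraints rather than just $F_b{}^cU_c=0$. I would therefore keep the proof to a couple of lines: substitute the three basis tractors into (\ref{endomorphisms}), read off the equivalence with (\ref{sym_alg_of_line}) componentwise, and note the normalisation remark. The write-up would simply say that it is again an elementary verification, displaying the three evaluations of $\Phi$ and observing that they reproduce exactly the conditions of Lemma~\ref{symmetry_algebra_of_a_line}.
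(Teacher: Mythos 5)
Your proof is correct and follows essentially the same route as the paper: the paper's proof is a terser version of exactly this direct substitution of the three basis tractors into (\ref{endomorphisms}) and componentwise comparison with (\ref{sym_alg_of_line}). Your extra remark about the unit-length normalisation pinning down $f$ and $h$ is a worthwhile clarification but does not change the argument.
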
 
\begin{proof}
If (\ref{sym_alg_of_line}) are satisfied, then all equations of
(\ref{nec_and_suff}) hold with $X^b=fU^b$ and $Y^b=hU^b$. Conversely, if
(\ref{nec_and_suff}) hold, then the first two equations determine $f$, $U^a$,
$\lambda$, and $h$. {From} (\ref{endomorphisms}), the first two constraints
from (\ref{sym_alg_of_line}) are manifest and we also discover that $Y_b=hU_b$,
which is the final constraint from (\ref{sym_alg_of_line}).\end{proof} 

\begin{proof}[Proof of Theorem~\ref{one}]
According to Theorem~\ref{distinguished}, it suffices to show that the 
conditions (\ref{nec_and_suff}) on a tractor
endomorphism $\Phi$ are preserved by the tractor connection along a geodesic
$\gamma$ of the metric $g_{ab}$ if and only if
\begin{equation}\label{Schouten_along_geodesic}
\Rho_{ab}U^b=\Rho_{bc}U^bU^cU_a\qquad\mbox{along}\enskip\gamma.\end{equation}
This is a straightforward verification as follows.
{From}~(\ref{conformal_tractor_connection}), the tractor connection
$\partial=U^a\nabla_a$ along $\gamma$ is given by
\begin{equation}\label{directional_derivative}
\partial\left[\begin{array}c\sigma\\ \mu_b\\ \rho\end{array}\right]= 
\left[\begin{array}c\partial\sigma-U^a\mu_a\\
\partial\mu_b+U_b\rho+U^a\Rho_{ab}\sigma\\ 
\partial\rho-U^a\Rho_a{}^b\mu_b\end{array}\right].\end{equation} Bearing in
mind that $\partial U^b=0$ (since $\gamma$ is a geodesic) the Leibniz rule now
calculates the effect of $\partial\Phi$ and, firstly, we find from
(\ref{nec_and_suff}) that
$$(\partial\Phi)\!\left[\begin{array}c0\\ 0\\ 1\end{array}\right]
=\partial\!\left[\begin{array}c 0\\
\!\!-fU_b\!\\ \lambda\end{array}\right]-
\Phi\partial\!
\left[\begin{array}c0\\ 0\\ 1\end{array}\right]
=\left[\begin{array}c0\\ 
-(\partial f-\lambda)U_b\\
\!\partial\lambda+h+\Rho_{ab}U^aU^b
\end{array}\right]\!,$$
which has the same form with $f$ and $\lambda$ replaced by 
$$\tilde{f}=\partial f-\lambda\quad\mbox{and}\quad
\tilde\lambda=\partial\lambda+h+f\Rho_{ab}U^aU^b,$$
respectively. Next we should compute
$$(\partial\Phi)\left[\begin{array}c0\\ U_b\\ 0\end{array}\right]
=\partial\Big(\Phi\left[\begin{array}c0\\ U_b\\ 0\end{array}\right]\Big)
-\Phi\partial\left[\begin{array}c0\\ U_b\\ 0\end{array}\right]\!,$$
and, from (\ref{nec_and_suff}) and~(\ref{directional_derivative}), we find
$$\partial\Big(\Phi\left[\begin{array}c0\\ U_b\\ 0\end{array}\right]\Big)
=\partial\left[\begin{array}c f\\
0\\ 
-h\end{array}\right]
=\left[\begin{array}c\partial f\\ 
-hU_b+fU^a\Rho_{ab}\\ 
-\partial h\end{array}\right]$$
whilst
$$\Phi\partial\left[\begin{array}c0\\ U_b\\ 0\end{array}\right]
=\Phi\left[\begin{array}c-1\\
0\\ 
-\Rho_{ab}U^aU^b\end{array}\right]
=\left[\begin{array}c \lambda\\
(f\Rho_{ac}U^aU^c-h)U_b\\ 
-\lambda\Rho_{bc}U^bU^c\end{array}\right].$$
Therefore, 
$$(\partial\Phi)\left[\begin{array}c0\\ U_b\\ 0\end{array}\right]
=\left[\begin{array}c\partial f-\lambda\\ 
f(\Rho_{bc}U^c-\Rho_{ac}U^aU^cU_b)\\ 
-\partial h+\lambda\Rho_{bc}U^bU^c\end{array}\right],$$
which has the form required by (\ref{nec_and_suff}), with $f$ and $h$ replaced 
by 
$$\tilde{f}=\partial f-\lambda\quad\mbox{and}\quad
\tilde{h}=\partial h-\lambda\Rho_{bc}U^bU^c,$$
respectively, if and only if (\ref{Schouten_along_geodesic}) holds. Finally, 
we need to compute
$$(\partial\Phi)\left[\begin{array}c1\\ 0\\ 0\end{array}\right]
=\partial\Big(\Phi\left[\begin{array}c1\\ 0\\ 0\end{array}\right]\Big)
-\Phi\partial\left[\begin{array}c1\\ 0\\ 0\end{array}\right]\!.$$
Well, from (\ref{nec_and_suff}), we find 
$$\partial\Big(\Phi\left[\begin{array}c1\\ 0\\ 0\end{array}\right]\Big)
=\partial\left[\begin{array}c -\lambda\\
hU_b\\ 0\end{array}\right]
=\left[\begin{array}c -\partial\lambda-h\\
(\partial h)U_b-\lambda\Rho_{ab}U^a\\ 
-h\Rho_{ac}U^aU^c\end{array}\right]$$
and
$$\Phi\partial\left[\begin{array}c1\\ 0\\ 0\end{array}\right]
=\Phi\left[\begin{array}c 0\\
U^a\Rho_{ab}\\ 
0\end{array}\right]
=\Phi\left[\begin{array}c 0\\
\Rho_{ac}U^aU^cU_b\\ 
0\end{array}\right],$$
where we have used (\ref{Schouten_along_geodesic}) to
substitute~$U^a\Rho_{ab}=\Rho_{ac}U^aU^cU_b$. From here, 
$$\Phi\partial\left[\begin{array}c1\\ 0\\ 0\end{array}\right]
=\left[\begin{array}c f\Rho_{ac}U^aU^c\\ 0\\ 
-h\Rho_{ac}U^aU^c\end{array}\right]$$
and so, finally, again using (\ref{Schouten_along_geodesic}) to substitute
$\Rho_{ac}U^aU^cU_b$ for~$\Rho_{ab}U^a$,
$$(\partial\Phi)\left[\begin{array}c1\\ 0\\ 0\end{array}\right]
=\left[\begin{array}c -\partial\lambda-h-f\Rho_{ac}U^aU^c\\
(\partial h)U_b-\lambda\Rho_{ab}U^a\\ 
0\end{array}\right]
=\left[\begin{array}c \tilde{\lambda}\\
\tilde{h}U_b\\ 
0\end{array}\right],$$
as required.\end{proof}

\section{The symmetry algebra}
In \S\ref{conformal_geometry} we defined, by means of~(\ref{SymL}), the
symmetry algebra ${\mathrm{Sym}}L$ for any curve $L\hookrightarrow G/P$.
Starting with any $V\in{\mathfrak{g}}$, we may consider the curve $L_V$ in
$G/P$ obtained as the image of $t\mapsto\exp(tV)\in G$. If
$V\not\in{\mathfrak{p}}$, this curve is non-trivial and homogeneous. By
construction, we have $V\in{\mathrm{Sym}}L_V$ but the full symmetry algebra
${\mathrm{Sym}}L_V$ is generally bigger than $\langle V\rangle$, the span
of~$V$. We expect a completely algebraic procedure to obtain
${\mathrm{Sym}}L_V$ from $V\in{\mathfrak{g}}\setminus{\mathfrak{p}}$. It may be
given as follows.
\begin{lemma}[from~\cite{DKR}]\label{dkr_lemma} Consider the series of 
subalgebras 
$${\mathfrak{p}}={\mathfrak{a}}_0\supseteq{\mathfrak{a}}_1
\supseteq{\mathfrak{a}}_2\supseteq\cdots$$
defined inductively by 
$${\mathfrak{a}}_{\ell+1}\equiv\{X\in{\mathfrak{a}}_\ell\mid 
[X,V]\in{\mathfrak{a}}_\ell+\langle V\rangle\}$$
and let ${\mathfrak{a}}_\infty=\cap_{k=0}^\infty\,{\mathfrak{a}}_\ell$. Then
${\mathrm{Sym}}L_V={\mathfrak{a}}_\infty+\langle V\rangle$.
\end{lemma}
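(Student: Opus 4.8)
The plan is to show that the algebraically-defined space $\mathfrak{p}_\infty + \langle V\rangle$ coincides with $\mathrm{Sym}\,L_V$ as defined by \eqref{SymL}. Recall that $\mathrm{Sym}\,L_V$ consists of those $X \in \mathfrak{g}$ whose induced fundamental vector field on $G/P$ is tangent to the curve $L_V = \{\exp(tV)P\}$ at every point of the curve. The key observation is that tangency of the vector field generated by $X$ at the point $\exp(tV)P$ is, after translating back to the identity coset by $\exp(-tV)$, the condition that $\mathrm{Ad}(\exp(-tV))X$ lies in $\mathfrak{p} + \langle V\rangle$; indeed $\langle V\rangle$ projects onto the tangent line of $L_V$ at the base point (since $V \notin \mathfrak{p}$ makes this a genuine line), and $\mathfrak{p}$ is the kernel of the projection $\mathfrak{g} \to T_{eP}(G/P)$. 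Thus
$$\mathrm{Sym}\,L_V = \{X \in \mathfrak{g} \mid \mathrm{Ad}(\exp(-tV))X \in \mathfrak{p} + \langle V\rangle \text{ for all } t\}.$$

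The next step is to convert this one-parameter family of linear conditions into the infinitely many bracket conditions defining the $\mathfrak{p}_\ell$. Since $\mathrm{Ad}(\exp(-tV)) = \exp(-t\,\mathrm{ad}\,V)$, expanding in a Taylor series in $t$ shows that $X \in \mathrm{Sym}\,L_V$ if and only if $(\mathrm{ad}\,V)^k X \in \mathfrak{p} + \langle V\rangle$ for all $k \geq 0$. (Analyticity of the exponential makes the Taylor expansion faithful: vanishing of all derivatives at $t=0$ forces the condition for all $t$, using that $\mathrm{Ad}(\exp(-tV))$ translates the condition at time $t$ to time $0$.) So I would first establish the clean characterisation
$$\mathrm{Sym}\,L_V = \{X \in \mathfrak{g} \mid (\mathrm{ad}\,V)^k X \in \mathfrak{p} + \langle V\rangle \text{ for all } k \geq 0\}.$$

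Finally I would identify this with $\mathfrak{p}_\infty + \langle V\rangle$ by an induction matching the filtration. One checks by induction on $\ell$ that $\mathfrak{p}_\ell + \langle V\rangle = \{X : (\mathrm{ad}\,V)^k X \in \mathfrak{p} + \langle V\rangle \text{ for } 0 \leq k \leq \ell\}$: the base case $\ell = 0$ is immediate since $[V,V]=0$, and the inductive step uses that $X \in \mathfrak{p}_{\ell+1}$ means $X \in \mathfrak{p}_\ell$ and $[X,V] \in \mathfrak{p}_\ell + \langle V\rangle$, which by the inductive hypothesis applied to $[X,V]$ (equivalently $-[V,X]$) unwinds to the stated condition at level $\ell+1$. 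One subtlety to watch is that membership of $X$ in $\mathfrak{p}_\ell$ is required, not merely membership modulo $\langle V\rangle$; since $V \notin \mathfrak{p}$ the coset $X + \langle V\rangle$ meets $\mathfrak{p}$ in at most a point, so tracking the representative is harmless, but this is the point that needs a word of care. Intersecting over all $\ell$ gives $\mathfrak{p}_\infty + \langle V\rangle = \mathrm{Sym}\,L_V$, and the fact that this is a subalgebra (already implicit in the statement, and guaranteed abstractly since $\mathrm{Sym}\,L_V$ is visibly closed under bracket) completes the argument. The main obstacle is purely bookkeeping: getting the translation between the geometric tangency condition along the whole curve and the algebraic $\mathrm{ad}$-conditions exactly right, including the role of $\langle V\rangle$ and the non-degeneracy coming from $V \notin \mathfrak{p}$.
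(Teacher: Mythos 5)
Your argument is correct, and in fact the paper offers no proof of this lemma at all: it is imported verbatim from the reference [DKR], so there is nothing internal to compare against. Your three-step route --- (i) translating ``tangent to $L_V$ at every point'' into $\mathrm{Ad}(\exp(-tV))X\in{\mathfrak{p}}+\langle V\rangle$ for all $t$ by left-translating back to the base point, (ii) converting this, via $\mathrm{Ad}(\exp(-tV))=\exp(-t\,\mathrm{ad}\,V)$ and analyticity, into $(\mathrm{ad}\,V)^kX\in{\mathfrak{p}}+\langle V\rangle$ for all $k$, and (iii) the induction identifying ${\mathfrak{p}}_\ell+\langle V\rangle$ with the truncation of that condition at level $\ell$ --- is exactly the natural proof, and each step checks out. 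Two small points deserve one more line each. First, at the very end you need $\bigcap_\ell({\mathfrak{p}}_\ell+\langle V\rangle)=\bigl(\bigcap_\ell{\mathfrak{p}}_\ell\bigr)+\langle V\rangle$; intersection does not distribute over sums in general, but here it does because $V\notin{\mathfrak{p}}\supseteq{\mathfrak{p}}_\ell$ makes every sum ${\mathfrak{p}}_\ell\oplus\langle V\rangle$ direct with the \emph{same} complement, so the ${\mathfrak{p}}$-component of $X$ is independent of $\ell$ --- this is the same non-degeneracy you already flagged, just used once more. Second, the statement also asserts that the ${\mathfrak{p}}_\ell$ are subalgebras; your abstract remark covers ${\mathfrak{p}}_\infty+\langle V\rangle$ but not the individual ${\mathfrak{p}}_\ell$, for which a one-line Jacobi-identity induction (using that ${\mathfrak{p}}_\ell+\langle V\rangle$ is ${\mathfrak{p}}_{\ell}$-stable under bracket) is needed. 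Neither issue affects the main identity.
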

In Lemma~\ref{symmetry_algebra_of_a_line} we computed the symmetry algebra of a
straight line in ${\mathbb{R}}^n$ directly from the definition (\ref{SymL}) and
in Lemma~\ref{symmetry_algebra_of_a_circle} we similarly and directly compute
the symmetry algebra of a circle in~${\mathbb{R}}^n$. These computations may be
circumvented by Lemma~\ref{dkr_lemma} provided we have a convenient realisation
of the Lie algebras ${\mathfrak{g}}\supset{\mathfrak{p}}$. 
In the conformal case, for example, we may identify the conformal Killing 
fields (\ref{conformal_Killing_fields}) with matrices 
$$\left[\begin{array}{ccc}\lambda&-Y_c&0\\
-X^b&F^b{}_c&Y^b\\
0&X_c&-\lambda
\end{array}\right]\in{\mathfrak{so}}(n+1,1)$$ 
and the parabolic subalgebra ${\mathfrak{p}}$ as matrices of the form 
$$\left[\begin{array}{ccc}\lambda&-Y_c&0\\
0&F^b{}_c&Y^b\\
0&0&-\lambda
\end{array}\right].$$ 
For our element $V\in{\mathfrak{g}}\setminus{\mathfrak{p}}$, let us take 
$$\left[\begin{array}{ccc}0&0&0\\
-U^b&0&0\\
0&U_c&0\end{array}\right],\enskip\mbox{such that}\enskip U^b U_b=1.$$
Then one readily verifies that
$${\mathfrak{a}}_1=
\left\{\left[\begin{array}{ccc}\lambda&-Y_c&0\\
0&F^b{}_c&Y^b\\
0&0&-\lambda
\end{array}\right]\mbox{\ s.t.\ }F^b{}_cU^c=0\right\},$$
and hence that
$${\mathfrak{a}}_\infty={\mathfrak{a}}_2=
\left\{\left[\begin{array}{ccc}\lambda&-hU_c&0\\
0&F^b{}_c&hU^b\\
0&0&-\lambda
\end{array}\right]\mbox{\ s.t.\ }F^b{}_cU^c=0\right\},$$
whence, by Lemma~\ref{dkr_lemma}, the full symmetry algebra is
\begin{equation}\label{conf_sym}{\mathfrak{s}}=
\left\{\left[\begin{array}{ccc}\lambda&-hU_c&0\\
-fU^b&F^b{}_c&hU^b\\
0&fU_c&-\lambda
\end{array}\right]\mbox{\ s.t.\ }F^b{}_cU^c=0\right\}\end{equation}
in agreement with (\ref{sym_alg_of_line}).

In \S\ref{contact_Legendrean_geometry}, we shall use
Lemma~\ref{dkr_lemma} to compute a different symmetry algebra, closely related
to the contact distinguished curves in CR geometry, as described
in~\S\ref{introduction}.

\subsection{The symmetry algebra in parabolic geometry}\label{para_sym}
The purpose of this subsection is to specialise the result of
Lemma~\ref{dkr_lemma} to the case that the homogeneous space $M=G/P$ is a flat
parabolic geometry and the curve $L\hookrightarrow G/P$ starts off, using the
notation from~\cite{parabook}, in a direction from $T^{-1}\!M\subseteq TM$.
All the examples in this article are modelled in this way and the general
algorithm given in Lemma~\ref{dkr_lemma} is especially congenial in this case.
To this end, suppose ${\mathfrak{g}}$ is semi\-simple,
${\mathfrak{p}}\subset{\mathfrak{g}}$ is parabolic, and
$V\in{\mathfrak{g}}\setminus{\mathfrak{p}}$ is nicely-positioned with respect
to a parabolic grading.  Specifically, let us suppose that ${\mathfrak{g}}$ is
$|k|$-graded:
$${\mathfrak{g}}={\mathfrak{g}}_{-k}\oplus\cdots\oplus{\mathfrak{g}}_{-1}
\oplus\underbrace{{\mathfrak{g}}_0\oplus{\mathfrak{g}}_1\oplus\cdots\oplus
{\mathfrak{g}}_k}_{\mbox{\normalsize$={\mathfrak{p}}$}}$$
as in~\cite{parabook} and that we start with
$V\in{\mathfrak{g}}_{-1}$. Recall that Lemma~\ref{dkr_lemma} defines
$${\mathfrak{a}}_1
\equiv\{X\in{\mathfrak{p}}\mid[X,V]\in{\mathfrak{p}}+\langle V\rangle\}.$$
Evidently, this is only a constraint on the ${\mathfrak{g}}_0$-component
of~$X$. Then, by induction, the subalgebra ${\mathfrak{a}}_{\ell+1}$ constrains
only the ${\mathfrak{g}}_\ell$-component. In particular, we can see when
matters stabilise; specifically ${\mathfrak{a}}_\infty={\mathfrak{a}}_{k+1}$.

In the conformal case for example, the Lie algebra
${\mathfrak{g}}={\mathfrak{so}}(n+1,1)$ is $|1|$-graded and, employing Dynkin
diagram notation from~\cite{beastwood},
$$\begin{array}{ccccc}{\mathfrak{so}}(n+1,1)&=&\oooodots{0}{1}{0}{0}\\
\|\\[-16pt]
{\mathfrak{g}}&=&{}\xooodots{0}{1}{0}{0}&\!\oplus
\begin{array}{c}\xooodots{-1}{0}{1}{0}\\
\oplus\\    \xooodots{0}{0}{0}{0}
\end{array}\oplus&\!\xooodots{-2}{1}{0}{0}\\
&&\|&\|&\|\\
&&{\mathfrak{g}}_{-1}&{\mathfrak{g}}_0&{\mathfrak{g}}_1
\end{array}$$
so ${\mathfrak{a}}_\infty={\mathfrak{a}}_2$ is confirmed and we can locate the
different pieces of the symmetry algebra~(\ref{conf_sym}), namely
$$\begin{array}{ccccc}
{\mathfrak{g}}_{-1}&\oplus&{\mathfrak{g}}_0&\oplus&{\mathfrak{g}}_1\\
\inn&&\inn&&\inn\\
fU^b&&\left(\!\!\begin{array}{c}
F_{bc}\\ \lambda\end{array}\!\!\right)&&hU_c\makebox[0pt][l]{.}
\end{array}$$

\section{Contact Legendrean geometry}\label{contact_Legendrean_geometry}
This geometry is an alternative real form of strictly-pseudoconvex CR geometry.
The flat model is a homogeneous space for ${\mathrm{SL}}(n+2,{\mathbb{R}})$
instead of the sphere $S^{2n+1}$ as a homogeneous space for
${\mathrm{SU}}(n+1,1)$. Specifically, it is the flag manifold
$${\mathbb{F}}_{1,n+1}({\mathbb{R}}^{n+2})
=\{{\mathcal{L}}\subset{\mathcal{H}}\subset{\mathbb{R}}^{n+2}\mid
\dim{\mathcal{L}}=1,\dim{\mathcal{H}}=n+1\}$$
and it is convenient to view elements of ${\mathfrak{sl}}(n+2,{\mathbb{R}})$ in 
blocks:
\begin{equation}\label{block_matrix}
\left[\begin{array}{ccc}a&Z_\beta&b\\
X^\alpha&C^\alpha{}_\beta&W^\alpha\\
d&Y_\beta&e\end{array}\right],\enskip\mbox{where}\enskip
a+C^\alpha{}_\alpha+e=0.\end{equation}
Let us take ${\mathfrak{p}}$ to be the block upper triangular
matrices, more precisely the subalgebra comprising elements of the form
$$\left[\begin{array}{ccc}a&Z_\beta&b\\
0&C^\alpha{}_\beta&W^\alpha\\
0&0&e\end{array}\right].$$
For our element in ${\mathfrak{g}}\setminus{\mathfrak{p}}$ let us take
$$\left[\begin{array}{ccc}0&0&0\\
U^\alpha&0&0\\
0&V_\alpha&0\end{array}\right],\enskip\mbox{such that}
\enskip U^\alpha V_\alpha=1.$$
Then one readily verifies that 
$${\mathfrak{a}}_1=\left\{\left[\begin{array}{ccc}a&Z_\alpha&b\\
0&C^\alpha{}_\beta&W^\alpha\\
0&0&e\end{array}\right]\mbox{\ s.t.\ }\begin{array}{l}
C^\alpha{}_\beta U^\beta=\frac{a+e}2U^\alpha\\[4pt]
V_\alpha C^\alpha{}_\beta=\frac{a+e}2V_\beta
\end{array}\right\},$$
that
$${\mathfrak{a}}_2=\left\{\left[\begin{array}{ccc}a&hV_\beta&b\\
0&C^\alpha{}_\beta&hU^\alpha\\
0&0&e\end{array}\right]\mbox{\ s.t.\ }\begin{array}{l}
C^\alpha{}_\beta U^\beta=\frac{a+e}2U^\alpha\\[4pt]
V_\alpha C^\alpha{}_\beta=\frac{a+e}2V_\beta
\end{array}\right\},$$
and finally that
$${\mathfrak{a}}_\infty={\mathfrak{a}}_3
=\left\{\left[\begin{array}{ccc}a&hV_\beta&0\\
0&C^\alpha{}_\beta&hU^\alpha\\
0&0&e\end{array}\right]\mbox{\ s.t.\ }\begin{array}{l}
C^\alpha{}_\beta U^\beta=\frac{a+e}2U^\alpha\\[4pt]
V_\alpha C^\alpha{}_\beta=\frac{a+e}2V_\beta
\end{array}\right\}$$
whence the full symmetry algebra is
\begin{equation}\label{full_symmetry_algebra}
{\mathfrak{s}}=\left\{\left[\begin{array}{ccc}a&hV_\beta&0\\
fU^\alpha&C^\alpha{}_\beta&hU^\alpha\\
0&fV_\beta&e\end{array}\right]\mbox{\ s.t.\ }\begin{array}{l}
C^\alpha{}_\beta U^\beta=\frac{a+e}2U^\alpha\\[4pt]
V_\alpha C^\alpha{}_\beta=\frac{a+e}2V_\beta
\end{array}\right\}.\end{equation}
As a consistency check, notice that 
$$\dim{\mathfrak{s}}=\begin{tabular}[t]{cl}$4$&for $\{h,a,e,f\}$\\
$+(n-1)^2$&for $C^\alpha{}_\beta$\\
$-1$&for the whole matrix being trace-free\\ \hline\hline
\rule{0pt}{12pt}\makebox[0pt][l]{$n^2-2n+4$}
\end{tabular}$$
and so the moduli space ${\mathrm{SL}}(n+2,{\mathbb{R}})/S$ of distinguished
curves of type~(c) in the flat model has dimension
$$\big((n+2)^2-1\big)-\big(n^2-2n+4\big)=6n-1,$$
in agreement with~(\ref{real_dimension}).

In this case, the Lie algebra ${\mathfrak{g}}={\mathfrak{sl}}(n+2)$ is 
$|2|$-graded. The case $n=3$ is sufficiently general to see what is happening to 
${\mathfrak{g}}=\oooo{1}{0}{0}{1}$:
$$\addtolength{\arraycolsep}{-3pt}\begin{array}{ccccccccc}
\xoox{1}{0}{0}{1}
&\oplus&
\!\begin{array}{c}\xoox{1}{0}{1}{-1}\\ \oplus\\
\xoox{-1}{1}{0}{1}\end{array}\!
&\oplus&
\!\begin{array}{c}\xoox{0}{0}{0}{0}\\ \oplus\\
\xoox{-1}{1}{1}{-1}\\ \oplus\\
\xoox{0}{0}{0}{0}\end{array}\!
&\oplus&
\!\begin{array}{c}\xoox{-2}{1}{0}{0}\\ \oplus\\
\xoox{0}{0}{1}{-2}\end{array}\!
&\oplus&\xoox{-1}{0}{0}{-1}\\
\|&&\|&&\|&&\|&&\|\\
{\mathfrak{g}}_{-2}&&{\mathfrak{g}}_{-1}&&{\mathfrak{g}}_0&&{\mathfrak{g}}_1
&&{\mathfrak{g}}_2\\[-10pt]
&&&&&&\makebox[0pt]{$
\underbrace{\hspace{200pt}}_{\mbox{\normalsize${\mathfrak{p}}.$}}$}
\end{array}$$
This confirms immediately that ${\mathfrak{a}}_\infty={\mathfrak{a}}_3$ and
a step-by-step calculation locates different pieces of the 
full symmetry algebra, starting with  
$$\left(\!\!\begin{array}{c}U^\alpha\\
V_\beta\end{array}\!\!\right)\in{\mathfrak{g}}_{-1}
=\begin{array}{c}\xoox{1}{0}{1}{-1}\\ \oplus\\
\xoox{-1}{1}{0}{1}\end{array}\quad\mbox{such that }U^\alpha V_\alpha\not=0.$$
The symmetry algebra ${\mathfrak{s}}$ comprises 
\begin{equation}\label{pieces_of_s}\begin{array}{ccccccccc}
{\mathfrak{g}}_{-2}&\oplus&{\mathfrak{g}}_{-1}&\oplus&{\mathfrak{g}}_0
&\oplus&{\mathfrak{g}}_1&\oplus&{\mathfrak{g}}_2\\
\inn&&\inn&&\inn&&\inn&&\inn\\
0&&f\left(\!\!\begin{array}{c}U^\alpha\\
V_\beta\end{array}\!\!\right)&&
\left(\!\!\begin{array}{c}a\\
C^\alpha{}_\beta\\
e\end{array}\!\!\right)
&&h\left(\!\!\begin{array}{c}
V_\beta\\
U^\alpha\end{array}\!\!\right)
&&0\makebox[0pt][l]{,}
\end{array}\end{equation}
where 
$$\textstyle a+C^\alpha{}_\alpha+e=0,\quad 
C^\alpha{}_\beta U^\beta=\frac{a+e}2U^\alpha,\quad
V_\alpha C^\alpha{}_\beta=\frac{a+e}2V_\beta.$$
\subsection{Contact Legendrean tractors}
To proceed, we need formul{\ae} for contact Legendrean tractors,
the standard tractor bundle being modelled on the standard representation of 
${\mathrm{SL}}(n+2,{\mathbb{R}})$, namely
$$\begin{array}{ccccccc}
\oooo{0}{0}{0}{1}
&=&\xoox{0}{0}{0}{1}&+&\xoox{0}{0}{1}{-1}&+&\xoox{-1}{0}{0}{0}\\
&&\|&&\|&&\|\\
&&\Wedge^0(0,1)&+&E(-1,0)&+&\Wedge^0(-1,0)\end{array},$$
where 
$$H=\begin{array}{c}E\\[-2pt] \oplus\\ F\end{array}
=\begin{array}{c}\xoox{1}{0}{1}{-1}\\ \oplus\\
\xoox{-1}{1}{0}{1}\end{array}$$
whose sections and tractor connection, in directions from $H$, we may write, in
a chosen exact scale following the conventions of~\cite[\S 5.2.15]{parabook},
as
\begin{equation}\label{tractor_connection_along_E}
\nabla_\alpha\left[\begin{array}c
\sigma \\ \mu^\beta \\ \rho
\end{array}\right]=
\left[\begin{array}c
\nabla_\alpha \sigma \\ 
\nabla_\alpha\mu^\beta+\delta_\alpha{}^\beta\rho+\Rho_\alpha{}^\beta\sigma\\
\nabla_\alpha\rho+A_{\alpha\beta}\mu^\beta+T_\alpha\sigma 
\end{array}\right]\end{equation}
and
\begin{equation}\label{tractor_connection_along_F}
\nabla^{\alpha}\left[\begin{array}c
\sigma \\ \mu^\beta \\ \rho
\end{array}\right]=
\left[\begin{array}c
\nabla^{\alpha}\sigma+\mu^\alpha\\ 
\nabla^{\alpha}\mu^\beta+A^{\alpha\beta}\sigma\\
\nabla^{\alpha}\rho+\Rho_\beta{}^\alpha\mu^\beta+T^\alpha\sigma 
\end{array}\right],\end{equation}
where
\begin{equation}\label{curvatures}
\begin{array}{rcl}\Rho_\alpha{}^\beta&\in
&\Gamma(\,\xoox{-2}{1}{0}{0}\otimes\xoox{0}{0}{1}{-2}\,)\\[4pt]
A_{\alpha\beta}&\in
&\Gamma(\,\xoox{-4}{2}{0}{0}\,)=\Gamma(\bigodot^2\xoox{-2}{1}{0}{0}\,)\\[4pt]
A^{\alpha\beta}&\in
&\Gamma(\,\xoox{0}{0}{2}{-4}\,)=\Gamma(\bigodot^2\xoox{0}{0}{1}{-2}\,),
\end{array}\end{equation}
are particular parts of the curvature whilst
$$T_\alpha\in\Gamma(\,\xoox{-3}{1}{0}{-1}\,)\quad\mbox{and}\quad
T^\alpha\in\Gamma(\,\xoox{-1}{0}{1}{-3}\,)$$
are given by 
\begin{equation}\label{Ts}\textstyle 
T_\alpha=\frac1{n+2}
\big(\nabla^\beta A_{\alpha\beta}-\nabla_\alpha\Rho_\beta{}^\beta\big)
\enskip\mbox{and}\enskip
T^\alpha=-\frac1{n+2}
\big(\nabla_\beta A^{\alpha\beta}-\nabla^\alpha\Rho_\beta{}^\beta\big).
\end{equation}
We shall also need the dual connection on cotractors:
\begin{equation}\label{cotractor_connection_along_E}
\nabla_\alpha
\left[\begin{array}c
\tau \\ \nu_{\beta} \\ \omega
\end{array}\right]=
\left[\begin{array}c
\nabla_{\alpha}\tau-\nu_{\alpha}\\ 
\nabla_{\alpha}\nu_{\beta}-A_{\alpha\beta}\tau\\
\nabla_{\alpha}\omega-\Rho_{\alpha}{}^{\beta}\nu_{\beta}-T_{\alpha}\tau
\end{array}\right]
\end{equation}
and
\begin{equation}\label{cotractor_connection_along_F}
\nabla^\alpha
\left[\begin{array}c
\tau \\ \nu_{\beta} \\ \omega
\end{array}\right]=
\left[\begin{array}c
\nabla^\alpha\tau\\
\nabla^\alpha\nu_{\beta}-\delta_\beta{}^\alpha\omega
-\Rho_\beta{}^\alpha\tau \\
\nabla^\alpha\omega-A^{\alpha\beta}\nu_{\beta}
-T^\alpha\tau 
\end{array}\right].
\end{equation}

\subsection{Adjoint tractors as endomorphisms}
Following our procedure in the conformal setting, we should view the adjoint
representation $\oooo{1}{0}{0}{1}$ of ${\mathrm{SL}}(n+2,{\mathbb{R}})$, here
written in case $n=3$, as the trace-free endomorphisms of the standard 
representation $\oooo{0}{0}{0}{1}$. Then, an adjoint tractor of the 
form (\ref{block_matrix}) acts by 
$$\left[\begin{array}c
\sigma \\ \mu^\beta \\ \rho
\end{array}\right]\longmapsto
\left[\begin{array}{ccc}e&Y_\beta&d\\
W^\alpha&C^\alpha{}_\beta&X^\alpha\\
b&Z_\beta&a\end{array}\right]\!\!\left[\begin{array}c
\sigma \\ \mu^\beta \\ \rho
\end{array}\right],$$
which, since
$$\left[\begin{array}{ccc}e&Y_\beta&d\\
W^\alpha&C^\alpha{}_\beta&X^\alpha\\
b&Z_\beta&a\end{array}\right]=\left[\begin{array}{ccc}0&0&1\\
0&{\mathrm{Id}}&0\\
1&0&0\end{array}\right]\!\!
\left[\begin{array}{ccc}a&Z_\beta&b\\
X^\alpha&C^\alpha{}_\beta&W^\alpha\\
d&Y_\beta&e\end{array}\right]\!\!
\left[\begin{array}{ccc}0&0&1\\
0&{\mathrm{Id}}&0\\
1&0&0\end{array}\right]$$
is consistent with Lie bracket being realised as matrix commutator. {From} this
viewpoint, the symmetry algebra~${\mathfrak{s}}$, as
in~(\ref{full_symmetry_algebra}), translates into a preferred class of
endomorphisms $\Phi$ of the standard tractor bundle. We see, for example, that
\begin{equation} \label{symmetry_on_tractors}
\Phi\!\left[\!\begin{array}c 0\\ 0\\ 1\end{array}\!\right]
\!=\!\left[\!\!\begin{array}c 0\\
f U^\alpha\\ 
a\end{array}\!\!\right]\quad
\Phi\!\left[\!\begin{array}c 0\\ U^\alpha\\ 0\end{array}\!\right]
\!=\!\left[\!\!\begin{array}c f\\
\frac{a + e}2U^\alpha\\ 
h\end{array}\!\!\right]\quad
\Phi\!\left[\!\begin{array}c 1\\ 0\\ 0\end{array}\!\right]
\!=\!\left[\!\!\begin{array}c e\\
hU^\alpha\\ 
0\end{array}\!\!\right]\!.
\end{equation}
To characterise~${\mathfrak{s}}$, we also need to encode that
$V_\alpha C^\alpha{}_\beta=\frac{a+e}2 V_\beta$ and for this it is convenient 
to use cotractors
$$\oooo{1}{0}{0}{0}=\xoox{1}{0}{0}{0}+\xoox{-1}{1}{0}{0}+\xoox{0}{0}{0}{-1},$$
the remaining condition required to characterise $\Phi$ as an endomorphism of 
cotractors being that 
\begin{equation}\label{remaining_condition}
\Phi\!\left[\!\begin{array}{c}0\\ V_\alpha\\ 0\end{array}\!\right]
=\left[\!\!\begin{array}{c}f\\ \frac{a+e}2 V_\alpha\\ h
\end{array}\!\!\right]\end{equation}
(in addition to
$$\Phi\!\left[\!\begin{array}{c}0\\ 0\\ 1\end{array}\!\right]
=\left[\!\!\begin{array}{c}0\\ fV_\alpha\\ e
\end{array}\!\!\right]\quad\mbox{and}\quad
\Phi\!\left[\!\begin{array}{c}1\\ 0\\ 0\end{array}\!\right]
=\left[\!\!\begin{array}{c}a\\ hV_\alpha\\ 0
\end{array}\!\!\right],$$
which can act as a check on consistency).

\subsection{The distinguished curves} We are in a position to determine, in the
contact Legendrean setting, whether a geodesic tangent to the contact
distribution and of `type~(c)' for an exact Weyl connection is distinguished as
an unparameterised curve. To be of `type~(c)' is precisely that its tangent
vector be of the form $(U^\alpha,V_\beta)$ with $U^\alpha V_\alpha\not=0$ and
we may suppose without loss of generality that $U^\alpha V_\alpha\equiv 1$
along~$\gamma$. To invoke Theorem~\ref{distinguished} we need a formula for the
tractor connection $\partial=U^\alpha\nabla_\alpha+V_\alpha\nabla^\alpha$ 
along~$\gamma$. According to (\ref{tractor_connection_along_E}) and 
(\ref{tractor_connection_along_F}) it is
$$\partial\left[\begin{array}c
\sigma \\ \mu^\beta \\ \rho
\end{array}\right]=
\left[\begin{array}c
\partial\sigma+V_\alpha\mu^\alpha\\ 
\partial\mu^\beta+U^\beta\rho+U^\alpha\Rho_\alpha{}^\beta\sigma
+V_\alpha A^{\alpha\beta}\sigma\\
\partial\rho+U^\alpha A_{\alpha\beta}\mu^\beta
+V_\alpha\Rho_\beta{}^\alpha\mu^\beta
+U^\alpha T_\alpha\sigma+V_\alpha T^\alpha\sigma
\end{array}\right].$$
As in the conformal case, we employ the Leibniz rule to compute 
$\partial\Phi$. Firstly, from (\ref{symmetry_on_tractors}) we find that
$$(\partial\Phi)\!\left[\!\begin{array}c 0\\ 0\\ 1\end{array}\!\right]
\!=\!\partial\!\left[\!\!\begin{array}c 0\\
f U^\beta\\ 
a\end{array}\!\!\right]
-\Phi\!\left[\!\!\begin{array}c 0\\
U^\beta\\ 
0\end{array}\!\!\right]\!=\!\left[\!\!\begin{array}c 0\\
(\partial f+\frac{a-e}2) U^\beta\\ 
\partial a+f\Lambda-h\end{array}\!\!\right],$$
where $\Lambda\equiv A_{\alpha\beta}U^\alpha U^\beta+\Rho_\alpha{}^\beta
U^\alpha V_\beta$. This is the same as the first condition from
(\ref{symmetry_on_tractors}) save for the replacements
$$\textstyle f\mapsto\tilde f\equiv\partial f+\frac{a-e}2\quad\mbox{and}\quad 
a\mapsto\tilde a\equiv\partial a-h+f\Lambda.$$
Next, we compute
$$\begin{array}{l}
(\partial\Phi)\!\left[\!\begin{array}c 0\\ U^\beta\\ 0\end{array}\!\right]
\!=\!\partial\!\left[\!\!\begin{array}c f\\
\frac{a+e}2 U^\beta\\ 
h\end{array}\!\!\right]
-\Phi\!\left[\!\!\begin{array}c 1\\
0\\ 
\Lambda
\end{array}\!\!\right]\\[16pt]
{}=\!\left[\!\!\begin{array}c\partial f+\frac{a+e}2\\
(\partial(\frac{a+e}2))U^\beta+hU^\beta+fU^\alpha\Rho_\alpha{}^\beta
+fV_\alpha A^{\alpha\beta}\\ 
\partial h+\frac{a+e}2\Lambda
+f(U^\alpha T_\alpha+V_\alpha T^\alpha)\end{array}\!\!\right]
-\left[\!\!\begin{array}c e\\
(h+f\Lambda)U^\beta\\ 
a\Lambda\end{array}\!\!\right]\\[16pt]
{}=\!\left[\!\!\begin{array}c\partial f+\frac{a-e}2\\
(\partial(\frac{a+e}2)-f\Lambda)U^\beta+fU^\alpha\Rho_\alpha{}^\beta
+fV_\alpha A^{\alpha\beta}\\ 
\partial h+\frac{e-a}2\Lambda
+f(U^\alpha T_\alpha+V_\alpha T^\alpha)\end{array}\!\!\right],\end{array}$$
which has the form
$$\left[\!\!\begin{array}c \tilde f\\
\frac{\tilde a+\tilde e}2 U^\beta\\ 
\tilde h\end{array}\!\!\right]$$
if and only if, in addition to $\textstyle\tilde f=\partial f+\frac{a-e}2$
as we already know,
$$\begin{array}{l}
U^\alpha\Rho_\alpha{}^\beta+V_\alpha A^{\alpha\beta}=\Xi U^\beta\\[4pt]
\tilde h=\partial h+\frac{e-a}2\Lambda+fK,\\[4pt]
\tilde e=\partial e+h+f(2\Xi-3\Lambda)\end{array}
\mbox{where $K\equiv U^\alpha T_\alpha+V_\alpha T^\alpha$}$$
for some smooth function $\Xi$ (and $\Lambda$ as above).
Now, we compute
$$\begin{array}{l}
(\partial\Phi)\!\left[\!\begin{array}c 1\\ 0\\ 0\end{array}\!\right]
\!=\!\partial\!\left[\!\!\begin{array}c e\\
hU^\beta\\
0\end{array}\!\!\right]
-\Phi\!\left[\!\!\begin{array}c 0\\
\Xi U^\beta\\ 
K
\end{array}\!\!\right]\\[16pt]
{}=\!\left[\!\!\begin{array}c\partial e+h\\
(\partial h)U^\beta+e\Xi U^\beta\\ 
h\Lambda+eK\end{array}\!\!\right]
-\left[\!\!\begin{array}c f\Xi\\
\frac{a+e}2 \Xi U^\beta+fK U^\beta\\ 
h\Xi+aK\end{array}\!\!\right]\\[16pt]
{}=\!\left[\!\!\begin{array}c\partial e+h-f\Xi\\
(\partial h+\frac{e-a}2\Xi-fK)U^\beta\\ 
(e-a)K-h(\Lambda-\Xi)\end{array}\!\!\right],\end{array}$$
which has the form
$$\left[\!\!\begin{array}c \tilde e\\
\tilde h U^\beta\\ 
0\end{array}\!\!\right]$$
if and only if
$$\tilde e=
\partial e+h-f\Xi\qquad\mbox{so}\enskip \Xi=\Lambda$$
and now 
$$K=0$$
and
$\textstyle\tilde h=\partial h+\frac{e-a}2\Xi=\partial h+\frac{e-a}2\Lambda$
is confirmed.
In summary, we have
$$\textstyle\tilde f=\partial f+\frac{a-e}2\qquad
\tilde h=\partial h+\frac{e-a}2\Lambda$$
$$\tilde a=\partial a-h+f\Lambda\qquad
\tilde e=\partial e+h-f\Lambda$$
for some smooth function $\Lambda$
and the following non-trivial conditions relating $(U^\alpha,V_\beta)$ and the
curvature of our chosen scale:
$$K\equiv U^\alpha T_\alpha+V_\alpha T^\alpha=0$$
and
$$U^\alpha\Rho_\alpha{}^\beta+V_\alpha A^{\alpha\beta}=\Lambda U^\beta,
\enskip\mbox{where }\Lambda\equiv 
A_{\alpha\beta}U^\alpha U^\beta+\Rho_\alpha{}^\beta U^\alpha V_\beta.$$

Finally, we need to determine the consequences of (\ref{remaining_condition}),
as the final restriction on~$\Phi$. For this we need a formula for the
cotractor connection $\partial=U^\alpha\nabla_a+V_\alpha\nabla^\alpha$
along~$\gamma$. According to (\ref{cotractor_connection_along_E}) and
(\ref{cotractor_connection_along_F}) it is
$$\partial
\left[\begin{array}c
\tau \\ \nu_{\beta} \\ \omega
\end{array}\right]=
\left[\begin{array}c
\partial\tau-U^\alpha\nu_{\alpha}\\ 
\partial\nu_{\beta}-U^\alpha A_{\alpha\beta}\tau-V_\beta\omega
-V_\alpha \Rho_\beta{}^\alpha\tau\\
\partial\omega-U^\alpha\Rho_{\alpha}{}^{\beta}\nu_{\beta}
-V_\alpha A^{\alpha\beta}\nu_{\beta}
-K\tau 
\end{array}\right],$$
with $K$ as above. Working through the consequences of 
(\ref{remaining_condition}) gives just one 
more consequence, namely
$$V_\alpha\Rho_\beta{}^\alpha+U^\alpha A_{\alpha\beta}=\Lambda V_\beta.$$
We have proved the following: 

\begin{thm}\label{contact_Legendrean_theorem}
Suppose that $M$, with contact distribution $H=E\oplus F$, is a contact
Legendrean manifold and that $(\nabla_\alpha,\nabla^\alpha)$ is an exact Weyl
connection in the contact directions. Suppose $\gamma\hookrightarrow M$ is a
geodesic for this connection, everywhere tangent to $H$ with tangent vector
$(U^\alpha,V_\alpha)$ such that $U^\alpha V_\alpha=1$. Then $\gamma$ is
distinguished as an unparameterised curve if and only if the following
constraints on curvature
\begin{equation}\label{constraints}
U^\alpha T_\alpha+V_\alpha T^\alpha=0\quad\mbox{and}\quad
\left[\!\begin{array}{cc}\Rho_\alpha{}^\beta&A^{\alpha\beta}\\
A_{\alpha\beta}&\Rho_\beta{}^\alpha\end{array}\!\right]
\left[\!\begin{array}{c}U^\alpha\\ V_\alpha\end{array}\!\right]
=\Lambda\left[\!\begin{array}{c}U^\beta\\ V_\beta\end{array}\!\right]
\end{equation}
are satisfied along~$\gamma$ for some scalar function $\Lambda$.
\end{thm}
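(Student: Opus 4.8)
The plan is to deduce the statement from Theorem~\ref{distinguished}, the tractor form of the Doubrov--\v{Z}\'adn\'{\i}k criterion, proceeding along exactly the lines of the proof of Theorem~\ref{one}. By hypothesis $\gamma$ is a geodesic of the exact Weyl connection and is of `type~(c)', so the relevant model curve is the one whose symmetry algebra $\mathfrak{s}$ was computed in~\eqref{full_symmetry_algebra}; via the matrix conjugation that realises the Lie bracket as a commutator, this $\mathfrak{s}$ is realised inside the adjoint tractor bundle $\mathcal{A}$, viewed as the trace-free endomorphisms of the standard tractor bundle~$\mathcal{T}$. The task is therefore to decide when, along $\gamma$, there is a subbundle $\mathcal{S}\subset\mathcal{A}|_\gamma$ whose fibres are pointwise conjugate to $\mathfrak{s}$ and which is preserved by the tractor connection $\partial\equiv U^\alpha\nabla_\alpha+V_\alpha\nabla^\alpha$.

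First I would record the fibrewise description of such an $\mathcal{S}$. An endomorphism $\Phi$ of $\mathcal{T}$ lies in the conjugate of $\mathfrak{s}$ attached to $(U^\alpha,V_\alpha)$ precisely when it satisfies the three identities~\eqref{symmetry_on_tractors}; to encode the remaining constraint $V_\alpha C^\alpha{}_\beta=\frac{a+e}2V_\beta$, which the action on $\mathcal{T}$ alone does not detect, one also imposes the cotractor identity~\eqref{remaining_condition}. Because $\gamma$ is tangent to $H$ with tangent vector $(U^\alpha,V_\alpha)$ normalised so that $U^\alpha V_\alpha=1$, the canonical projection $\mathcal{A}\to TM$ automatically carries $\mathcal{S}$ onto $T\gamma$, so nothing is lost in this normalisation.

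The core of the argument is then a Leibniz-rule computation of $\partial\Phi$ from the explicit formulae~\eqref{tractor_connection_along_E}--\eqref{cotractor_connection_along_F} for the (co)tractor connection restricted to $\gamma$, using that $\partial U^\alpha=\partial V_\alpha=0$ since $\gamma$ is a geodesic of the Weyl connection. One evaluates $\partial\Phi$ on the model tractors $(0,0,1)$, $(0,U^\alpha,0)$, $(1,0,0)$ and then on the cotractor $(0,V_\alpha,0)$, and asks that each answer again have the shape prescribed by~\eqref{symmetry_on_tractors} and~\eqref{remaining_condition}. This forces the transformed parameters $\tilde f=\partial f+\frac{a-e}2$, $\tilde h=\partial h+\frac{e-a}2\Lambda$, $\tilde a=\partial a-h+f\Lambda$, $\tilde e=\partial e+h-f\Lambda$ --- so that $\mathcal{S}$ is closed under $\partial$ --- and leaves exactly the residual conditions $U^\alpha T_\alpha+V_\alpha T^\alpha=0$, $U^\alpha\Rho_\alpha{}^\beta+V_\alpha A^{\alpha\beta}=\Lambda U^\beta$ and (from the cotractor step) $V_\alpha\Rho_\beta{}^\alpha+U^\alpha A_{\alpha\beta}=\Lambda V_\beta$, with $\Lambda=A_{\alpha\beta}U^\alpha U^\beta+\Rho_\alpha{}^\beta U^\alpha V_\beta$ forced along the way. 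The last two relations assemble into the single $2\times2$ matrix eigenvalue equation of~\eqref{constraints}. For the converse, granted the two constraints and an arbitrary function $\Lambda$, the same formulae exhibit the family of $\Phi$ obeying~\eqref{symmetry_on_tractors} and~\eqref{remaining_condition} as a $\partial$-stable subbundle conjugate to $\mathfrak{s}$, so Theorem~\ref{distinguished} yields that $\gamma$ is distinguished.

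I expect the only real obstacle to be organisational rather than conceptual: tracking which of the curvature quantities $\Rho_\alpha{}^\beta$, $A_{\alpha\beta}$, $A^{\alpha\beta}$, $T_\alpha$, $T^\alpha$ of~\eqref{curvatures}--\eqref{Ts} enters each slot of $\partial\Phi$, and checking that the cotractor computation genuinely contributes the one extra independent relation $V_\alpha\Rho_\beta{}^\alpha+U^\alpha A_{\alpha\beta}=\Lambda V_\beta$ and is otherwise consistent with the tractor computation, which can then serve as a built-in check. No analysis or existence theory enters; everything is linear algebra along the curve, organised by the parabolic grading exactly as in the conformal model of~\S\ref{conformal_geometry}.
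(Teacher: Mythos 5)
Your proposal is correct and follows essentially the same route as the paper: invoking Theorem~\ref{distinguished}, encoding the symmetry algebra \eqref{full_symmetry_algebra} via the endomorphism conditions \eqref{symmetry_on_tractors} together with the cotractor condition \eqref{remaining_condition}, and extracting the constraints \eqref{constraints} from a Leibniz-rule computation of $\partial\Phi$ along the geodesic. The transformed parameters $\tilde f$, $\tilde h$, $\tilde a$, $\tilde e$ and the residual curvature relations you predict match the paper's calculation exactly, including the extra relation $V_\alpha\Rho_\beta{}^\alpha+U^\alpha A_{\alpha\beta}=\Lambda V_\beta$ supplied by the cotractor step.
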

\begin{cor} All of the unparameterised `type~(c)' contact geodesics of an exact
Weyl connection on a contact Legendrean manifold are distinguished if and only
if
\begin{equation}\label{cf_einstein}T_\alpha=0,\quad 
T^\alpha=0,\quad 
A_{\alpha\beta}=0,\quad
A^{\alpha\beta}=0,\quad
\Rho_\alpha{}^\beta=\lambda\delta_\alpha{}^\beta,\end{equation}
for some smooth function~$\lambda$.
\end{cor}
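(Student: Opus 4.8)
The plan is to deduce the Corollary from Theorem~\ref{contact_Legendrean_theorem} by quantifying over all admissible tangent directions. Saying that \emph{all} type~(c) contact geodesics of the exact Weyl connection are distinguished means that for every point $x\in M$ and every pair $(U^\alpha,V_\alpha)$ with $U^\alpha V_\alpha=1$ there is a scalar $\Lambda$ (depending on $x$ and the direction) so that the two constraints~(\ref{constraints}) hold at~$x$; here I use that through each such initial direction there passes a type~(c) geodesic. So the task is purely pointwise and linear-algebraic: characterise those curvature data $(T_\alpha,T^\alpha,A_{\alpha\beta},A^{\alpha\beta},\Rho_\alpha{}^\beta)$ for which the system~(\ref{constraints}) is solvable for \emph{every} admissible $(U^\alpha,V_\alpha)$.

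First I would handle the eigenvalue condition, the second equation in~(\ref{constraints}). Write the symmetric operator
$$\Theta=\left[\!\begin{array}{cc}\Rho_\alpha{}^\beta&A^{\alpha\beta}\\
A_{\alpha\beta}&\Rho_\beta{}^\alpha\end{array}\!\right]$$
acting on $H=E\oplus F$; the requirement is that every vector of the form $(U^\alpha,V_\alpha)$ with $U^\alpha V_\alpha=1$ be an eigenvector of~$\Theta$. The set of such vectors is Zariski-dense in (an open cone of) $H$ — scaling $U\mapsto tU$, $V\mapsto t^{-1}V$ keeps the pairing equal to~$1$, and one can still move $U$ and $V$ independently within $E$ and $F$ — so if every one of them is an eigenvector then, exactly as in the proof of the conformal Corollary, $\Theta$ must be a scalar multiple of the identity on~$H$. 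Matching blocks forces $A^{\alpha\beta}=0$, $A_{\alpha\beta}=0$, and $\Rho_\alpha{}^\beta=\lambda\delta_\alpha{}^\beta$, $\Rho_\beta{}^\alpha=\lambda\delta_\beta{}^\alpha$ for a single function $\lambda$ (the scalar is forced to be the same on both summands by looking at mixed vectors, e.g. applying $\Theta$ to $(U^\alpha,V_\alpha)$ and reading off that the $E$- and $F$-eigenvalues agree). Conversely, when these hold, the second constraint is satisfied with $\Lambda=\lambda$, since then $\Lambda\equiv A_{\alpha\beta}U^\alpha U^\beta+\Rho_\alpha{}^\beta U^\alpha V_\beta=\lambda U^\alpha V_\alpha=\lambda$, consistently.

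It remains to feed this back into the first constraint $U^\alpha T_\alpha+V_\alpha T^\alpha=0$. Once we know $A_{\alpha\beta}=0=A^{\alpha\beta}$ and $\Rho_\alpha{}^\beta=\lambda\delta_\alpha{}^\beta$, the formul\ae~(\ref{Ts}) reduce the $T$'s to $T_\alpha=-\tfrac1{n+2}\nabla_\alpha(\Rho_\beta{}^\beta)=-\tfrac{n-1}{n+2}\nabla_\alpha\lambda$ and similarly $T^\alpha=\tfrac{n-1}{n+2}\nabla^\alpha\lambda$; but in fact I do not even need these identities, because the first constraint alone, demanded for all admissible $(U^\alpha,V_\alpha)$, is enough. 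Indeed, fixing any nonzero $V\in F$ and letting $U$ range over the (open, hence spanning) set of vectors in $E$ with $U^\alpha V_\alpha=1$, the function $U\mapsto U^\alpha T_\alpha$ is affine and must equal the constant $-V_\alpha T^\alpha$; an affine function on a spanning open set that is constant has vanishing linear part, so $T_\alpha=0$, and then symmetrically $T^\alpha=0$. Conversely $T_\alpha=T^\alpha=0$ trivially gives the first constraint. Assembling the two halves yields precisely~(\ref{cf_einstein}), and the converse direction is immediate since under~(\ref{cf_einstein}) both constraints of Theorem~\ref{contact_Legendrean_theorem} hold (with $\Lambda=\lambda$) for every type~(c) direction.

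The only genuine subtlety — the ``hard part'' such as it is — is the density/spanning argument: one must be sure that the constraint $U^\alpha V_\alpha=1$ still leaves enough freedom to conclude that a tensor annihilated on all such vectors vanishes, and that the scalar eigenvalue is forced to be globally a single function $\lambda$ rather than varying with the direction or differing between the $E$- and $F$-blocks. Both points are settled by the same observation used for the conformal Corollary, namely that the admissible $(U^\alpha,V_\alpha)$ sweep out a nonempty open subset of $H=E\oplus F$ on which polynomial identities propagate; I would spell this out in one or two lines rather than invoking heavy machinery. Everything else is a routine comparison of the blocks appearing in~(\ref{constraints}) with the list~(\ref{cf_einstein}).
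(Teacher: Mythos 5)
Your proposal is the right argument and is essentially what the paper does: its entire proof of the Corollary is the single sentence that it is ``elementary algebra'' to check that the constraints (\ref{constraints}) hold for all admissible $(U^\alpha,V_\alpha)$ if and only if (\ref{cf_einstein}) holds, and you have simply carried out that algebra, using (as the paper implicitly does) that every admissible direction at every point is realised by a type~(c) contact geodesic. The eigenvector half of your argument is sound: the admissible vectors sweep out the quadric $U^\alpha V_\alpha=1$ in $H=E\oplus F$, whose positive rescalings $(tU,tV)$ fill the open cone $U^\alpha V_\alpha>0$; since being an eigenvector is scale-invariant, this open spanning set of eigenvectors forces the block operator to be $\Lambda\,\mathrm{Id}$ on all of $H$, and reading off the blocks on vectors of the form $(U,0)$ and $(0,V)$ gives $A_{\alpha\beta}=0$, $A^{\alpha\beta}=0$ and $\Rho_\alpha{}^\beta=\lambda\delta_\alpha{}^\beta$. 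One justification in the $T$-step is off, however: for fixed $V$ the set $\{U\in E: U^\alpha V_\alpha=1\}$ is an affine hyperplane, not an open set, and a linear functional that is constant on a spanning affine hyperplane need \emph{not} have vanishing linear part --- it is only forced to be proportional to the defining covector (for instance $T_\alpha=V_\alpha$ itself gives $U^\alpha T_\alpha\equiv1$ on that hyperplane). So from a single $V$ you may conclude only $T_\alpha=\mu V_\alpha$; you must then let $V$ vary over $F$ (which, for $\mathrm{rank}\,F\ge2$, forces $T_\alpha=0$), after which $V_\alpha T^\alpha=0$ for all $V$ gives $T^\alpha=0$. This is a one-line repair rather than a structural problem, and the converse direction, with $\Lambda=\lambda$, is immediate as you say.
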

\begin{proof} It is elementary algebra to verify that the constraints
(\ref{constraints}) are satisfied for all $(U^\alpha,V_\alpha)$ such that 
$U^\alpha V_\alpha=1$ if and only if the equations (\ref{cf_einstein}) hold.
\end{proof}
Recall that the geometry that we are dealing with here is contact Legendrean
equipped with an exact Weyl connection, equivalently a nowhere vanishing 
{\em scale\/} $\sigma\in\Gamma(\xoox{1}{0}{0}{1})$, as
in~\cite[\S5.2.14]{parabook} where $\theta=1/\sigma\in
\Gamma(\,\xoox{-1}{0}{0}{-1}\,)\hookrightarrow\Gamma(\Wedge^1)$ is seen as a
choice of {\em contact form}. The equations (\ref{cf_einstein}) should be
regarded as the analogue of the Einstein equations in Riemannian geometry. The
following proposition supports this analogy. 
\begin{prop} If
\eqref{cf_einstein} hold, then $\lambda$ is constant.
\end{prop}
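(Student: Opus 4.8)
The plan is to extract the conclusion essentially from the definitions \eqref{Ts} of the secondary curvature quantities $T_\alpha,T^\alpha$, and then to upgrade ``$\lambda$ is constant along the contact distribution'' to ``$\lambda$ is constant'' using that the contact distribution is bracket-generating. First I would substitute the hypotheses \eqref{cf_einstein} into \eqref{Ts}. Since $A_{\alpha\beta}=0$ and $A^{\alpha\beta}=0$ identically, the divergence terms $\nabla^\beta A_{\alpha\beta}$ and $\nabla_\beta A^{\alpha\beta}$ drop out, while $\Rho_\alpha{}^\beta=\lambda\delta_\alpha{}^\beta$ gives $\Rho_\beta{}^\beta=n\lambda$ (an honest function, because in the chosen exact scale all the density weights are trivialised). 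Hence
\[
T_\alpha=-\tfrac{1}{n+2}\,\nabla_\alpha\Rho_\beta{}^\beta=-\tfrac{n}{n+2}\,\nabla_\alpha\lambda,\qquad
T^\alpha=\tfrac{1}{n+2}\,\nabla^\alpha\Rho_\beta{}^\beta=\tfrac{n}{n+2}\,\nabla^\alpha\lambda,
\]
so the remaining hypotheses $T_\alpha=0$ and $T^\alpha=0$ are exactly $\nabla_\alpha\lambda=0$ and $\nabla^\alpha\lambda=0$. On the function $\lambda$ these covariant derivatives are just the ordinary ones, so this says precisely that $d\lambda$ annihilates every local section of $H=E\oplus F$, i.e.\ $X\lambda=0$ for all $X\in\Gamma(H)$.

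Next I would invoke that $H$ is a contact distribution, hence bracket-generating: already $\Gamma(H)+[\Gamma(H),\Gamma(H)]=\Gamma(TM)$, the Levi bracket $H\times H\to TM/H$ being non-degenerate. For any $X,Y\in\Gamma(H)$ one has $[X,Y]\lambda=X(Y\lambda)-Y(X\lambda)=0$, so $d\lambda$ also annihilates $[X,Y]$; iterating, $d\lambda$ annihilates all iterated Lie brackets of sections of $H$, and these span $TM$ pointwise. Therefore $d\lambda=0$, so $\lambda$ is locally constant, and constant on each connected component of $M$.

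I do not expect a serious obstacle here. The two points that need a little care are: that in an exact scale the (a priori weighted) object $\Rho_\beta{}^\beta$ is genuinely the function $n\lambda$, with the exact Weyl connection differentiating it as an ordinary function (no correction term), so that the identification of $T_\alpha=0$, $T^\alpha=0$ with $d\lambda|_H=0$ is clean; and that one really does need the bracket-generating property, since $d\lambda$ vanishing on $H$ alone would not force $d\lambda=0$ without it — the elementary commutator computation above is what bridges the gap, and it works because a single Lie bracket already reaches the Reeb direction transverse to $H$.
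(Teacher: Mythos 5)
Your proof is correct and follows essentially the same route as the paper's: substitute \eqref{cf_einstein} into the formul{\ae} \eqref{Ts} to conclude $\nabla_\alpha\lambda=0$ and $\nabla^\alpha\lambda=0$ in the chosen exact scale. The only difference is that you make explicit the final step from $d\lambda|_H=0$ to $d\lambda=0$ via the bracket-generating property of the contact distribution, which the paper leaves implicit; that is a genuine (and necessary) detail, correctly handled.
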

\begin{proof} Notice that, although $\lambda=\frac1n\Rho_\alpha{}^\alpha$ is,
in the first instance, a section of the line bundle $\,\xoox{-1}{0}{0}{-1}$, we
are working in the presence of a scale $\sigma\in\Gamma(\xoox{1}{0}{0}{1})$,
which trivialises this bundle. The formul{\ae} (\ref{Ts}) for $T_\alpha$ and
$T^\alpha$ now show that $\lambda$ is constant.
\end{proof}

\subsection{Examples}\label{contact_legendrean_examples} Given the strength of
the constraints (\ref{cf_einstein}), one might be concerned that they are only
satisfied for the flat model.  After all, the Einstein equations in three
dimensions imply constant curvature and one needs to look in four dimensions to
find non-trivial solutions.  Indeed, looking ahead to the corresponding and
more familiar equations (\ref{cf_einstein_CR}) in CR geometry, it turns out
that there are no non-trivial solutions to (\ref{cf_einstein}) in three
dimensions.  Hence, we should look in five dimensions and good places to look
are homogeneous structures.  Fortunately, the homogeneous contact Legendrean
structures (with isotropy) have been recently classified by Doubrov, Medvedev,
and The~\cite{DMT}.  Not only that, but their examples come equipped with a
natural choice of contact form.  We follow their notation in asserting that the
following models (some of which depend on one or two parameters)
\begin{center}
N.8,\quad N.7-1,\quad N.7-2,\quad N.6-1,\quad N.6-2
\end{center}
provide non-trivial solutions of (\ref{cf_einstein}) with~$\lambda=0$. More 
interesting is a particular model of the form D.7. Specifically, we may take 
the contact form $\sigma\equiv du-p\,dx-q\,dy$ with 
\begin{equation}\label{lambda_is_one}
E=\{\sigma,dx,dy\}^\perp\quad\mbox{and}\quad
F=\{\sigma,dp-p^2dx,dq-q^2dy\}^\perp\end{equation} 
in local co\"ordinates $(x,y,u,p,q)$. This produces a non-trivial solution to
(\ref{cf_einstein}) with $\lambda\equiv -1/3$. We remark that the computations
here are carried out using {\sc Maple}, which is able to deal with all models
from \cite{DMT} save for D.6-3\@. Even choosing an explicit parameter, this
model remains intractable (with our current program/implementation). The 
computations themselves are unilluminating and hence omitted.

\subsection{A first BGG operator}
By analogy with the conformal case, one might expect that the existence 
of a scale for which the equations (\ref{cf_einstein}) hold, is governed by a 
first BGG operator. Acting on scales, there is just one such operator:
$$\xoox{1}{0}{0}{1}\begin{array}{c}\nearrow\\ \searrow\end{array}\!\!
\begin{array}{c}\xoox{-3}{2}{0}{0}\\ \oplus\\ \xoox{0}{0}{2}{-3}\end{array}$$
and written in any chosen scale, it is given by
$$\sigma\mapsto
\left[\!\begin{array}{l}
\nabla_\alpha\nabla_\beta\sigma-A_{\alpha\beta}\sigma\\[4pt]
\nabla^\alpha\nabla^\beta\sigma-A^{\alpha\beta}\sigma
\end{array}\!\right]$$
where $\nabla_\alpha$ and $\nabla^\alpha$ are parts of the Weyl connection for
that scale and $A_{\alpha\beta}$ and $A^{\alpha\beta}$ are parts of the
corresponding curvature~(\ref{curvatures}). Evidently, if $\sigma\not=0$ is
in the kernel of this operator and we choose to view it in the scale defined 
by $\sigma$, then it becomes
$$1\mapsto
\left[\!\begin{array}{l}
\nabla_\alpha\nabla_\beta 1-A_{\alpha\beta}1\\[4pt]
\nabla^\alpha\nabla^\beta 1-A^{\alpha\beta}1
\end{array}\!\right]=\left[\!\begin{array}{l}
-A_{\alpha\beta}\\[4pt]
-A^{\alpha\beta}
\end{array}\!\right]$$
and we conclude that $A_{\alpha\beta}=0$ and $A^{\alpha\beta}=0$ in the 
scale~$\sigma$. These are part of the equations (\ref{cf_einstein}). It is 
unclear whether the remaining equations from (\ref{cf_einstein}) are captured 
by any BGG operator, although the equation 
$\Rho_\alpha{}^\beta=\lambda\delta_\alpha{}^\beta$ seems to be lurking in the 
formul{\ae} (\ref{tractor_connection_along_E}) and 
(\ref{tractor_connection_along_F}) for standard tractors.

\section{CR geometry}\label{CR_geometry}
Computing the symmetry algebra in the CR setting according to
Lemma~\ref{dkr_lemma} entails exactly the same arithmetic. Specifically, if we
take
$${\mathfrak{su}}(n+1,1)=\left\{\left[\begin{array}{ccc}
\lambda&-\bar{r}^t&iq\\
s&C&r\\
ip&-\bar{s}^t&-\bar\lambda\end{array}\right]\;\rule[-12pt]{.7pt}{30pt}\;
\begin{array}{l}C\mbox{ is skew Hermitian}\\
{\mbox{trace}}(C)+\lambda-\bar\lambda=0\end{array}\right\}$$
then we find that
$${\mathfrak{s}}=\left\{\!\left[\begin{array}{ccc}
\!x+i\theta&-h\bar{U}^t&0\\
fU&M+i\theta{\mathrm{Id}}&hU\\
0&-f\bar{U}^t&-x+i\theta\!\end{array}\right]\;\rule[-12pt]{.7pt}{30pt}\;
\begin{array}{l}M\mbox{ is skew Hermitian}\\
MU=0\\
{\mbox{trace}}(M)+(n+2)i\theta=0
\end{array}\!\right\},$$
which one recognises as the Lie algebra of~(\ref{this_is_S}) when $U$ is a
standard basis vector.

Similarly, the investigation of distinguished curves in CR geometry follows
exactly the contact Legendrean case. Indeed, one can pursue contact Legendrean
geometry in the holomorphic setting and then CR geometry is simply an
alternative real form. Thus, one arrives at the following result (with 
conventions from \cite{parabook,GG}).

\begin{thm}\label{CR_theorem}
Suppose $(M,H,J)$ is a CR manifold of hypersurface type and that $\theta$ is a
choice of contact form. Let $(\nabla_\alpha,\nabla_{\bar\alpha})$ denote the
exact Weyl connection in the contact directions that is defined by~$\theta$.
Suppose $\gamma\hookrightarrow M$ is a geodesic for this connection, everywhere
tangent to $H$ with tangent vector $(U^\alpha,V^{\bar\alpha})$ such that
$h_{\alpha\bar\beta}U^\alpha V^{\bar\beta}=1$, where $h_{\alpha\bar\beta}$ is
the Levi form, viewed in the scale defined by~$\theta$. Then $\gamma$ is
distinguished as an unparameterised curve if and only if the following
constraints on curvature
$$U^\alpha T_\alpha+V^{\bar\alpha}T_{\bar\alpha}=0\quad\mbox{and}\quad
\left[\!\begin{array}{cc}\Rho_\alpha{}^\beta&A_{\bar\alpha}{}^\beta\\
A_\alpha{}^{\bar\beta}&\Rho^{\bar\beta}{}_{\bar\alpha}\end{array}\!\right]
\left[\!\begin{array}{c}U^\alpha\\ V^{\bar\alpha}\end{array}\!\right]
=\Lambda\left[\!\begin{array}{c}U^\beta\\ V^{\bar\beta}\end{array}\!\right]$$
are satisfied along~$\gamma$ for some scalar function $\Lambda$ (where indices 
are raised and lowered using the Levi form~$h_{\alpha\bar\beta}$).
\end{thm}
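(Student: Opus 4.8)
The plan is to imitate, essentially verbatim, the proof of Theorem~\ref{contact_Legendrean_theorem}, exploiting the fact that hypersurface CR geometry is the real form of the holomorphic contact Legendrean geometry obtained by setting $F=\overline{E}$, barred indices playing the role of the second copy of the Legendrean distribution and the Levi form $h_{\alpha\bar\beta}$ playing the role of the pairing $U^\alpha V_\alpha$. First I would record the CR analogues of the structural inputs used in the Legendrean case: the symmetry algebra $\mathfrak{s}\subset\mathfrak{su}(n+1,1)$ computed above via Lemma~\ref{dkr_lemma}; its realisation as a preferred class of $\langle\,,\,\rangle$-preserving endomorphisms $\Phi$ of the standard CR tractor bundle, obtained by the same conjugation trick that turns (\ref{block_matrix}) into the endomorphism form; and the formulae for the standard (and dual) tractor connection in the contact directions $\nabla_\alpha,\nabla_{\bar\alpha}$ in the exact scale determined by $\theta$, which are the real forms of (\ref{tractor_connection_along_E}) and (\ref{tractor_connection_along_F}), with curvature quantities $\Rho_\alpha{}^\beta$, $A_{\bar\alpha}{}^\beta$, $A_\alpha{}^{\bar\beta}$, $\Rho^{\bar\beta}{}_{\bar\alpha}$ and lower-order quantities $T_\alpha,T_{\bar\alpha}$ given by the analogue of (\ref{Ts}).

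Next I would invoke Theorem~\ref{distinguished}: $\gamma$ is distinguished as an unparameterised curve modelled on the relevant $L\hookrightarrow G/P$ if and only if there is a subbundle $\mathcal{S}\subset\mathcal{A}|_\gamma$, fibrewise conjugate to $\mathrm{Sym}\,L$, preserved by the tractor connection along $\gamma$. As in the conformal and Legendrean treatments, the hypothesis that $\gamma$ is a type~(c) contact geodesic for the exact Weyl connection lets us normalise the tangent to $(U^\alpha,V^{\bar\alpha})$ with $h_{\alpha\bar\beta}U^\alpha V^{\bar\beta}=1$ and $\partial(U^\alpha,V^{\bar\alpha})=0$ along $\gamma$, where $\partial=U^\alpha\nabla_\alpha+V^{\bar\alpha}\nabla_{\bar\alpha}$. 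Encoding membership in $\mathfrak{s}$ as the CR counterpart of the conditions (\ref{symmetry_on_tractors}) and (\ref{remaining_condition}) — evaluating $\Phi$ on the three standard frame tractors and one cotractor — I would then apply the Leibniz rule to compute $(\partial\Phi)$ on each of these, exactly as in the Legendrean computation. The algebra is, as the text already advertises, \emph{exactly the same arithmetic}; the free parameters $f,h$ and the block entries $x,i\theta,M$ absorb the harmless derivative terms, while closure of $\mathcal{S}$ under $\partial$ forces precisely $U^\alpha T_\alpha+V^{\bar\alpha}T_{\bar\alpha}=0$ together with the eigenvector equation displayed in the theorem, with $\Lambda\equiv A_{\bar\alpha}{}^\beta\!\cdot\!(\text{contraction})+\Rho_\alpha{}^\beta U^\alpha V^{\bar\beta}$ emerging as the common proportionality scalar.

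I expect the only genuine obstacle to be bookkeeping rather than conceptual: making sure the reality/Hermitian conventions line up, in particular that the two off-diagonal curvature blocks really pair up into the symmetric $2\times2$ operator $\left[\begin{smallmatrix}\Rho_\alpha{}^\beta&A_{\bar\alpha}{}^\beta\\ A_\alpha{}^{\bar\beta}&\Rho^{\bar\beta}{}_{\bar\alpha}\end{smallmatrix}\right]$ acting on $(U^\alpha,V^{\bar\alpha})$, and that raising and lowering with $h_{\alpha\bar\beta}$ is done consistently throughout (the Legendrean case had a literal second Legendrean bundle, whereas here it is the conjugate bundle, so one must be careful that $T_{\bar\alpha}=\overline{T_\alpha}$ etc., and that the single scalar constraint $K=0$ reads $U^\alpha T_\alpha+V^{\bar\alpha}T_{\bar\alpha}=0$ and is automatically real). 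Since the Legendrean computation has already been carried out in full above and the present one is its real form, I would simply remark that the argument is identical under the dictionary $(\beta\text{-index of }F)\leftrightarrow\bar\beta$, $(U^\alpha V_\alpha)\leftrightarrow h_{\alpha\bar\beta}U^\alpha V^{\bar\beta}$, and leave the reader to transcribe it, which is precisely what the sentence preceding the theorem promises. Hence the proof is: apply Theorem~\ref{distinguished}, unpack $\mathcal{S}$ via the tractor endomorphism conditions, differentiate along $\gamma$ using the CR tractor connection formulae, and read off (\ref{constraints})'s CR analogue as the exact condition for closure.
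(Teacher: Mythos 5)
Your proposal is correct and follows essentially the same route as the paper, which likewise proves Theorem~\ref{CR_theorem} by observing that CR geometry is an alternative real form of (holomorphic) contact Legendrean geometry and that the symmetry-algebra computation and the tractor-connection Leibniz-rule argument of Theorem~\ref{contact_Legendrean_theorem} carry over verbatim under the dictionary you describe. The paper gives no more detail than you do here, simply recording the CR symmetry algebra inside $\mathfrak{su}(n+1,1)$ and asserting that the arithmetic is identical.
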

\begin{cor} All of the unparameterised type~(c) contact geodesics of an exact
Weyl connection on a CR manifold of hypersurface type are distinguished if and
only if
$$T_\alpha=0,\quad 
T_{\bar\alpha}=0,\quad 
A_{\alpha\beta}=0,\quad
A_{\bar\alpha\bar\beta}=0,\quad
\Rho_{\alpha\bar\beta}=\lambda h_{\alpha\bar\beta},$$
for some smooth function~$\lambda$, which is then necessarily constant.
\end{cor}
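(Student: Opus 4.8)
The plan is to mirror the structure of the Corollary to Theorem~\ref{contact_Legendrean_theorem}. By Theorem~\ref{CR_theorem}, a type~(c) contact geodesic $\gamma$ with tangent $(U^\alpha,V^{\bar\alpha})$ normalised by $h_{\alpha\bar\beta}U^\alpha V^{\bar\beta}=1$ is distinguished precisely when the two displayed curvature constraints hold along $\gamma$. So the task reduces to pure linear algebra: determine exactly when
$$U^\alpha T_\alpha+V^{\bar\alpha}T_{\bar\alpha}=0\quad\mbox{and}\quad
\left[\!\begin{array}{cc}\Rho_\alpha{}^\beta&A_{\bar\alpha}{}^\beta\\
A_\alpha{}^{\bar\beta}&\Rho^{\bar\beta}{}_{\bar\alpha}\end{array}\!\right]
\left[\!\begin{array}{c}U^\alpha\\ V^{\bar\alpha}\end{array}\!\right]
=\Lambda\left[\!\begin{array}{c}U^\beta\\ V^{\bar\beta}\end{array}\!\right]$$
hold for \emph{every} admissible pair $(U^\alpha,V^{\bar\alpha})$, for some scalar $\Lambda$ (possibly depending on the pair).

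First I would handle the $T$-equation. Since $U^\alpha T_\alpha+V^{\bar\alpha}T_{\bar\alpha}=0$ must hold for all $U,V$ with $h_{\alpha\bar\beta}U^\alpha V^{\bar\beta}=1$, I would argue that the constraint $h_{\alpha\bar\beta}U^\alpha V^{\bar\beta}=1$ is the only relation and that, since $T_\alpha$ involves only $U$ and $T_{\bar\alpha}$ only $V$, one can vary $U$ and $V$ essentially independently; the upshot is $T_\alpha=0$ and $T_{\bar\alpha}=0$. (More carefully: fixing a valid $V$, the map $U\mapsto U^\alpha T_\alpha$ is linear and the admissible $U$'s for that $V$ span the whole space, forcing $T_\alpha=0$; symmetrically for $T_{\bar\alpha}$.) Next I would turn to the matrix equation. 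Write $W=(U^\alpha,V^{\bar\alpha})$; the condition says $MW=\Lambda W$ where $M$ is the indicated Hermitian-type curvature operator on $H\oplus\bar H$. Demanding that every admissible $W$ be an eigenvector forces $M$ to act as a scalar on the span of admissible $W$'s; since those span all of $H\oplus\bar H$, we get $M=\lambda\,\mathrm{Id}$ for a single function $\lambda$. Reading off the block structure, the off-diagonal blocks give $A_{\bar\alpha}{}^\beta=0$ and $A_\alpha{}^{\bar\beta}=0$ (equivalently $A_{\alpha\beta}=0$, $A_{\bar\alpha\bar\beta}=0$ after lowering with the Levi form), and the diagonal blocks give $\Rho_\alpha{}^\beta=\lambda\delta_\alpha{}^\beta$, i.e.\ $\Rho_{\alpha\bar\beta}=\lambda h_{\alpha\bar\beta}$. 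Conversely, if all five equations hold then both constraints of Theorem~\ref{CR_theorem} are trivially satisfied with $\Lambda=\lambda$ for every $(U^\alpha,V^{\bar\alpha})$, so all type~(c) contact geodesics are distinguished.

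Finally, for the constancy of $\lambda$, I would repeat the argument of the Proposition following Theorem~\ref{contact_Legendrean_theorem}: in the presence of the contact form $\theta$ (equivalently the scale trivialising the relevant density line bundle), $\lambda=\frac1n\Rho_\alpha{}^\alpha$ becomes an honest function, and the CR analogues of the formul{\ae}~(\ref{Ts}) expressing $T_\alpha$ and $T_{\bar\alpha}$ in terms of $\nabla A$ and $\nabla\Rho_\beta{}^\beta$ then force $\nabla_\alpha\lambda=0$ once $A_{\alpha\beta}=0$ and $T_\alpha=0$, hence $\lambda$ is locally constant.

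The only real subtlety — the step I expect to need the most care — is the "for all admissible $(U,V)$'' quantifier: one must check that the set of vectors $(U^\alpha,V^{\bar\alpha})$ with $h_{\alpha\bar\beta}U^\alpha V^{\bar\beta}=1$ is large enough (Zariski-dense in $H\oplus\bar H$, or at least spanning) that "eigenvector for every such pair'' genuinely upgrades to "scalar operator,'' and likewise that the partial-independence of $U$ and $V$ used for the $T$-equation is legitimate. This is elementary but is exactly where the one-line phrase "elementary algebra'' in the contact Legendrean corollary is doing its work, so I would spell it out here rather than merely cite it.
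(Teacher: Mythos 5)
Your proposal follows the paper's route exactly: the paper gives no separate argument for this corollary, treating it as the CR analogue of the corollary to Theorem~\ref{contact_Legendrean_theorem} (whose proof is the one line ``elementary algebra'') together with the subsequent Proposition for the constancy of~$\lambda$ via the formul{\ae}~(\ref{Ts}). Your reduction to Theorem~\ref{CR_theorem}, the linear-algebra step, and the constancy argument are all the intended ones, and the conclusion is correct.

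The one place where what you have written would not survive scrutiny is the justification you offer for the crux, namely that ``eigenvector for every admissible $(U^\alpha,V^{\bar\alpha})$'' upgrades to ``scalar operator.'' Neither of your two suggested reasons works as stated: a \emph{spanning} set of eigenvectors does not force an operator to be scalar (any diagonalisable operator has one), and the admissible set $\{h_{\alpha\bar\beta}U^\alpha V^{\bar\beta}=1\}$ is a quadric hypersurface, hence \emph{not} Zariski-dense in $H\otimes{\mathbb{C}}$. The repair is the homogeneity of the eigenvector condition: the admissible set is preserved by $(U,V)\mapsto(tU,t^{-1}V)$ (with $|t|=1$ in the genuinely real CR case), and comparing the resulting conditions for distinct values of $t$ decouples the diagonal and off-diagonal blocks, giving $\Rho_\alpha{}^\beta U^\alpha\in\langle U^\beta\rangle$ and $A_{\bar\alpha}{}^\beta V^{\bar\alpha}\in\langle U^\beta\rangle$ separately; varying $U$ then forces $\Rho_\alpha{}^\beta=\lambda\delta_\alpha{}^\beta$ (the eigenvector locus of a non-scalar operator is a finite union of proper subspaces) and kills the off-diagonal blocks, since a map linear in $V^{\bar\alpha}$ cannot equal a nonzero multiple of $U^\beta$ for all~$U$. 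The same scaling disposes of the $T$-equation more cleanly than your ``fix $V$, vary $U$'' argument, which as written only yields $T_\alpha\propto h_{\alpha\bar\beta}V^{\bar\beta}$ until you also vary~$V$: from $tU^\alpha T_\alpha+t^{-1}V^{\bar\alpha}T_{\bar\alpha}=0$ for all admissible $t$ one gets $U^\alpha T_\alpha=0$ and $V^{\bar\alpha}T_{\bar\alpha}=0$ separately, whence $T_\alpha=0=T_{\bar\alpha}$. With these repairs your argument is complete and coincides with the paper's.
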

\noindent{\bf Remark}\enskip Of course, in the CR setting these curvature are 
`real' in the sense that
$$T_{\bar\alpha}=\overline{T_\alpha},\quad 
A_{\bar\alpha\bar\beta}=\overline{A_{\alpha\beta}},\quad 
\Rho_{\alpha\bar\beta}=\overline{\Rho_{\beta\bar\alpha}},$$
so the constraints in this corollary amount to
\begin{equation}\label{cf_einstein_CR}T_\alpha=0,\quad A_{\alpha\beta}=0,\quad
\Rho_{\alpha\bar\beta}=\lambda h_{\alpha\bar\beta},\end{equation} 
where $\lambda\in{\mathbb{R}}$.  The last of these equations was already
investigated by Lee~\cite{Lee}, who adopts the terminology `pseudo-Einstein'
for its solutions in line with the `pseudo-Hermitian' terminology of
Webster~\cite{W} for a CR structure equipped with a choice of contact form.
Lee~\cite{Lee} already remarks in his introduction that this condition is `very
different from its analogue in Riemannian geometry\ldots\ such a structure is
less rigid than an Einstein structure on a Riemannian manifold.'  The
additional equations in (\ref{cf_einstein_CR}) seem to restore the analogy 
with Einstein metrics.

We remark that the constraint $A_{\alpha\beta}=0$ has also been considered in
the literature.  The tensor $A_{\alpha\beta}$ is called the Webster torsion and
its vanishing picks out the {\em transversally symmetric\/} pseudo-Hermitian
geometries~\cite{Lee}, i.e.\ those for which the associated Reeb field is an
infinitesimal CR symmetry.  The pseudo-Hermitian geometries enjoying
(\ref{cf_einstein_CR}) are dubbed `TS-pseudo-Einstein' by
Leitner~\cite{Leitner} who constructs examples thereof on $S^1$ bundles over
K\"ahler-Einstein manifolds. In~\cite{CG}, \v{C}ap and Gover give a tractor 
formulation of the TS-pseudo-Einstein structures, which further strengthens the 
analogy with Einstein metrics in conformal geometry.

We should explain what happens when $n=1$, i.e.~when $M$ is
three-dimensional.  The equation $\Rho_{\alpha\bar\beta}=\lambda
g_{\alpha\bar\beta}$ is always satisfied and $\lambda$ is the Webster scalar
curvature.  The tensor $A_{\alpha\beta}$ is equivalent to a complex-valued
function (see e.g.~\cite{EN} for an exposition in three dimensions). These two 
invariants `solve the equivalence problem' in this context and, in particular, 
if $\lambda=1$ and $A_{\alpha\bar\beta}=0$, then $M$ is the sphere 
$S^3\subset{\mathbb{C}}^2$ 
equipped with its standard CR structure and contact form. 

\medskip\noindent{\bf Remark}\enskip Regarding examples, we may consider the
five-dimensional CR manifolds with many symmetries. Following~\cite{K}, example
N.7-2 from \S\ref{contact_legendrean_examples} has a Levi-definite real form
$$\{(z_1,z_2,z_3)\in{\mathbb{C}}^3\mid
{\mathrm{Im}}(z_3)=\log(1+|z_1|^2)+|z_2|^2\}$$
satisfying (\ref{cf_einstein_CR}) with~$\lambda=0$.
The more interesting example~(\ref{lambda_is_one}), with $\lambda=-1/3$, 
has a Levi-indefinite version,
which appears in \cite{L} as
$$\{(z_1,z_2,z_3)\in{\mathbb{C}}^3\mid
{\mathrm{Im}}(z_3)=\log(1+z_1\overline{z_2})+\log(1+\overline{z_1}z_2)\}$$
and in~\cite{DMT_CR} as the tube in ${\mathbb{C}}^3$ over the affine
homogeneous surface
$$\{(x,y,u)\in{\mathbb{R}}^3\mid u=\log(xy)\}.$$
As observed in~\cite{DMT_CR}, and pointed out to us by Dennis The, this example
also has a Levi-definite incarnation, namely as the tube over the affine
homogeneous surface
$$\{(x,y,u)\in{\mathbb{R}}^3\mid u=\log(x^2+y^2)\}.$$

\section*{Appendix: on the equation for conformal circles}
Of course, as we saw in the flat model, the reason that distinguished curves in
conformal geometry are called {\em conformal circles\/} is that they are
modelled on circles in ${\mathbb{R}}^n$. As mentioned
in~(\ref{conformal_Killing_fields}), the Lie algebra~${\mathfrak{g}}$, to be
used in the Doubrov-\v{Z}\'adn\'{\i}k~\cite{DZ} characterisation of conformal
circles, is the algebra of conformal Killing fields
$$\textstyle 
(X^b-F^b{}_cx^c+\lambda x^b-Y_cx^cx^b+\frac12x_cx^cY^b)
\,\partial/\partial x^b$$
for constant tensors $X^b,F^b{}_c,\lambda,Y_b$ with $F_{bc}$ skew. The circles 
through $0\in{\mathbb{R}}^n$ may be parameterised by a pair of vectors
\begin{equation}\label{pair}
U^a\mbox{ of unit length},\qquad C^a\mbox{ such that }U^aC_a=0.\end{equation}
Indeed, it is readily verified that 
\begin{equation}\label{circle}
{\mathbb{R}}\ni t\mapsto\frac{2}{1+t^2C^aC_a}(tU^b+t^2C^b)\end{equation}
is the circle with velocity $U^a$ and acceleration $C^a$ at the origin, where 
we are allowing straight lines as `circles' with $C^a=0$.
\begin{lemma}\label{symmetry_algebra_of_a_circle}
The symmetry algebra of the circle \eqref{circle} is specified 
by the linear constraints
\begin{equation}\label{symL}
X^b=fU^b\qquad F^b{}_cU^c=-fC^b\qquad Y^b=hU^b+\lambda C^b+F^b{}_cC^c
\end{equation}
where $f$, $\lambda$, and $h$ are arbitrary.  
\end{lemma}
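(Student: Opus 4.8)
The statement is the exact analogue of Lemma~\ref{symmetry_algebra_of_a_line}, now computing $\mathrm{Sym}L$ for the circle \eqref{circle} rather than the line \eqref{line}, so I would prove it directly from the definition \eqref{SymL}: a conformal Killing field
$$\textstyle V = (X^b-F^b{}_cx^c+\lambda x^b-Y_cx^cx^b+\tfrac12 x_cx^cY^b)\,\partial/\partial x^b$$
lies in $\mathrm{Sym}L$ if and only if its value at every point of the curve \eqref{circle} is tangent to the curve there. Concretely, writing $x^b(t) = \frac{2}{1+t^2C^cC_c}(tU^b+t^2C^b)$ for the parametrised circle, the tangency condition is that $V^b(x(t))$ is proportional to $\dot x^b(t)$ for all $t$. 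The plan is to substitute $x(t)$ into the expression for $V$, clear the denominator, and equate the resulting vector-valued polynomial in $t$ with $\phi(t)\,\dot x^b(t)$ for some scalar function $\phi$; matching coefficients of powers of $t$ then yields the linear constraints \eqref{symL} on $X^b,F^b{}_c,\lambda,Y_b$.

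First I would record $\dot x^b(t)$. A clean way to organise the computation is to note that, after clearing the common denominator $1+t^2C^cC_c$, the tangent direction to \eqref{circle} is spanned by
$$\textstyle (1-t^2C^cC_c)U^b + 2t C^b\quad\text{(up to an overall positive scalar)},$$
since differentiating $\frac{2}{1+t^2\kappa}(tU^b+t^2C^b)$ with $\kappa\equiv C^cC_c$ gives $\frac{2}{(1+t^2\kappa)^2}\big((1-t^2\kappa)U^b+2tC^b\big)$, using $U^bC_b=0$. Next I would evaluate $V^b$ along the curve. The key simplifications are $U^aU_a=1$, $U^aC_a=0$, and the fact that $x_cx^c$ along the curve equals $\frac{4}{(1+t^2\kappa)^2}(t^2 + t^4\kappa) = \frac{4t^2}{1+t^2\kappa}$, which is pleasantly low-degree. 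Substituting and multiplying through by a suitable power of $(1+t^2\kappa)$ turns the tangency requirement into an identity between two polynomial vector fields in $t$; the coefficient of $t^0$ forces $X^b=fU^b$, the coefficient of $t^1$ (linear terms coming from $-F^b{}_cx^c+\lambda x^b$) forces $F^b{}_cU^c=-fC^b$ once $\lambda$ is identified, and the quadratic and cubic coefficients (where the $Y$-terms enter) pin down $Y^b=hU^b+\lambda C^b+F^b{}_cC^c$. One must be slightly careful that $f$, $\lambda$, $h$ emerge as the genuinely free parameters — $f$ from the $U$-component of the value at $t=0$, $\lambda$ as the trace-like scaling parameter already present in $V$, and $h$ from the residual freedom in $Y^b$ along $U^b$.

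The main obstacle I anticipate is purely bookkeeping: the field $V$ is quadratic in $x$ and $x(t)$ is a rational function of $t$, so a brute-force substitution produces a rather long rational expression, and it is easy to mismanage the powers of $(1+t^2\kappa)$ or to lose a term proportional to $C^b$. To keep this under control I would decompose everything into the $U^b$-component, the $C^b$-component, and the component in the orthogonal complement of $\mathrm{span}\{U,C\}$ (onto which only $F^b{}_cU^c$ and $F^b{}_cC^c$ can contribute), and handle the three components separately. As a sanity check, setting $C^b=0$ must recover Lemma~\ref{symmetry_algebra_of_a_line} — the constraints \eqref{symL} degenerate to \eqref{sym_alg_of_line} with $\lambda$ unconstrained — and the dimension count must again be $\binom{n-1}{2}+3$: the constraint $F^b{}_cU^c=-fC^b$ is $n-1$ equations on the skew $F$ (it is automatically orthogonal to $U$ since $F$ is skew and $U^cC_c=0$... wait, one checks $U_bF^b{}_cU^c=0$ holds identically, consistent with $U_b(-fC^b)=0$), cutting $\binom n2$ down appropriately, while $Y^b$ is entirely determined by $h,\lambda,F$, contributing nothing new, so the free data is $F$ constrained as above together with the three scalars $f,\lambda,h$, matching the stated count.
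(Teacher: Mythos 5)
Your proposal is correct and is exactly the ``elementary verification'' that the paper itself offers as the proof: substitute the parametrised circle into the general conformal Killing field, clear the denominator $1+t^2C^cC_c$, and match coefficients of powers of $t$ against the tangent direction $(1-t^2\kappa)U^b+2tC^b$, decomposing into the $U^b$, $C^b$, and orthogonal components. Your organisation of the computation, the degenerate check $C^b=0$ against Lemma~\ref{symmetry_algebra_of_a_line}, and the dimension count $\big((n-1)(n-2)/2\big)+3$ are all sound.
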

\begin{proof} An elementary verification.\end{proof}
Note, in particular, that since the symmetry algebra of a circle is non-trivial
(of dimension $\big((n-1)(n-2)/2\big)+3$), circles are homogeneous. It is 
well-known that circles are preserved by conformal transformations. This is 
why they are suitable model curves in conformal geometry. We also take the 
opportunity to note that the moduli space of conformal circles in $S^n$ has 
dimension given by
$$\begin{array}{ccccc}\dim{\mathrm{SO}}(n+1,1)&-&
\dim{\mathrm{Sym}}\,\mbox{circle}\\
\|&&\|\\
(n+2)(n+1)/2&-&\big((n-1)(n-2)/2\big)+3&=&3(n-1),
\end{array}$$
as observed geometrically in the Introduction.

To unpack the Doubrov-\v{Z}\'adn\'{\i}k characterisation, we shall use the
conformal tractor bundle and its connection given, in the presence of a metric,
by~(\ref{conformal_tractor_connection}). We shall also need the general form of
a tractor endomorphism as in (\ref{endomorphisms}) and the tractor directional
derivative along $\gamma$ as in~(\ref{directional_derivative}).
We conclude that $\gamma\hookrightarrow M$ is a conformal circle if
and only if we can find a subbundle ${\mathcal{S}}$ of the endomorphisms of
${\mathcal{T}}$ along~$\gamma$, having the form (\ref{endomorphisms}) and such
that
\begin{itemize}
\item $X^a$ is tangent to $\gamma$,
\item fibrewise, the subbundle ${\mathcal{S}}$ has the form~(\ref{symL}),
\item ${\mathcal{S}}$ is preserved by the tractor connection $\partial\equiv 
U^a\nabla_a$ along~$\gamma$.
\end{itemize}
To proceed, it is useful to reformulate the conditions (\ref{symL}) as follows.
\begin{lemma}\label{REFORMULATE}
In order that an endomorphism \eqref{endomorphisms}
satisfy~\eqref{symL}, for some fields $U^a$ and $C^a$ satisfying~\eqref{pair},
it is necessary and sufficient that
\begin{equation}\label{NEC_AND_SUFF}\begin{array}{c}
\Phi\left[\begin{array}c0\\ 0\\ 1\end{array}\right]
=\left[\begin{array}c 0\\
-fU_b\\ 
\lambda\end{array}\right]\qquad
\Phi\left[\begin{array}c0\\ U_b\\ 0\end{array}\right]
=\left[\begin{array}c f\\
-fC_b\\ 
-h-fC^bC_b\end{array}\right]\\[-5pt] \\
\Phi\left[\begin{array}c 1\\ -C_b\\ 0\end{array}\right]
=\left[\begin{array}c -\lambda\\
hU_b+\lambda C_b\\ 
\lambda C^bC_b\end{array}\right].
\end{array}\end{equation}
\end{lemma}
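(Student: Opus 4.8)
The plan is to verify Lemma~\ref{REFORMULATE} by direct substitution, exactly as in the proof of Lemma~\ref{reformulate} for the straight-line case, but now keeping track of the acceleration vector~$C^a$. First I would assume the linear constraints~\eqref{symL} hold and simply compute the action of the endomorphism~\eqref{endomorphisms} on the three specific tractors appearing in~\eqref{NEC_AND_SUFF}. For the first tractor $(0,0,1)^t$, formula~\eqref{endomorphisms} gives $(X^b,-X_b,\lambda)$; substituting $X^b=fU^b$ and using $U^aC_a=0$ reproduces the first equation. For $(0,U_b,0)^t$ one gets $(X^bU_b,F_b{}^cU_c,-Y^bU_b)$; substituting $X^b=fU^b$ (so $X^bU_b=f$), $F^b{}_cU^c=-fC^b$ (so $F_b{}^cU_c=-fC_b$, using skewness and $F_{bc}=-F_{cb}$), and $Y^b=hU^b+\lambda C^b+F^b{}_cC^c$ (so $Y^bU_b=h+0+F^b{}_cC^cU_b=h-F^b{}_cU_bC^c=h+fC^bC_b$ after using the second constraint) yields the second equation. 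The third tractor $(1,-C_b,0)^t$ is handled the same way, and the only mildly delicate point is the bottom slot, where one needs $Y^b(-C_b)$ to combine with the other terms to give $\lambda C^bC_b$; again this follows from expanding $Y^b$ and using $U^aC_a=0$ together with the skewness of~$F_{bc}$ (which kills $F^b{}_cC^cC_b$).

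Conversely, I would assume the three tractor equations~\eqref{NEC_AND_SUFF} hold and extract the constraints~\eqref{symL}. From the top and bottom components of the first equation we read off $X^b=fU^b$ and the value of~$\lambda$. From the second equation, the top component gives $f=X^bU_b$ (consistent), the middle component $F_b{}^cU_c=-fC_b$ gives the second constraint in~\eqref{symL} once we know $X^b=fU^b$, and the bottom component pins down~$h$. Finally the middle component of the third equation, $Y_b\cdot 1+F_b{}^c(-C_c)-X_b\cdot 0 = hU_b+\lambda C_b$, rearranges to $Y_b=hU_b+\lambda C_b+F_b{}^cC_c$, which is the last constraint; one should check the remaining (top and bottom) components of the third equation are then automatically satisfied, using the constraints already derived.

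Honestly, this lemma is a routine verification with no real obstacle — the label of the proof in the paper will surely be \textbf{An elementary verification}, mirroring Lemma~\ref{reformulate}. The only place to be careful is bookkeeping: one must consistently use that $F_{bc}$ is skew (so that contractions like $F^b{}_cU_bC^c=-F^b{}_cU^cC_b$) and that $U^aC_a=0$, and one must be disciplined about which index is raised or lowered, since the $g_{ab}$ term in the tractor endomorphism and the orthogonality relation interact in the bottom slots. The appearance of the extra $fC^bC_b$ term in the middle equation of~\eqref{NEC_AND_SUFF}, absent in the line case~\eqref{nec_and_suff}, is exactly the trace of the acceleration and is the signal that one has correctly kept the $C^a$-dependence; I would double-check that term as the main sanity check of the computation.
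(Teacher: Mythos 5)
Your proposal is correct and follows essentially the same route as the paper: direct substitution of \eqref{symL} into \eqref{endomorphisms} (using $U^aU_a=1$, $U^aC_a=0$ and the skewness of $F_{bc}$) for the forward direction, and for the converse reading off $X^b=fU^b$, $\lambda$, $C_b$ and $h$ from the first two equations and then extracting $Y_b=hU_b+\lambda C_b+F_{bc}C^c$ from the middle component of the third. The only quibble is a bookkeeping slip --- it is the \emph{middle}, not the top, component of the first equation that yields $X^b=fU^b$ --- which does not affect the argument.
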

\begin{proof}
If (\ref{symL}) are satisfied, then it is straightforward to verify that all
equations of (\ref{NEC_AND_SUFF}) hold. Conversely, notice that the first two
equations from (\ref{NEC_AND_SUFF}) determine $f$, $U^a$, $\lambda$, $C^a$, and
$h$. The first two constraints from (\ref{symL}) are manifest. If we now
substitute into (\ref{endomorphisms}), then we find that
$$\Phi\left[\begin{array}c 1\\ 0\\ 0\end{array}\right]
=\left[\begin{array}c -\lambda\\
Y_b\\ 
0\end{array}\right]\quad\mbox{and}\quad
\Phi\left[\begin{array}c0\\ C_b\\ 0\end{array}\right]
=\left[\begin{array}c 0\\
F_{bc}C^c\\ 
-Y_bC^b\end{array}\right].$$
{From} the second component of the last equation of (\ref{NEC_AND_SUFF}) we
conclude that $Y_b=hU_b+\lambda C_b+F_{bc}C^c$, which is the final 
constraint from (\ref{symL}).\end{proof}
In fact, three of the conditions in (\ref{NEC_AND_SUFF}) are automatic as 
follows.
\begin{lemma}\label{compact_reformulate}
In order that an endomorphism \eqref{endomorphisms}
satisfy~\eqref{symL}, for some fields $U^a$ and $C^a$ satisfying~\eqref{pair},
it is necessary and sufficient that
\begin{equation}\label{compact_nec_and_suff}
\Phi\!\left[\begin{array}c\!0\!\\ \!0\!\\ \!1\!\end{array}\right]
\!\!=\!\!\left[\begin{array}c 0\\
\!\!-fU_b\!\!\\ 
\lambda\end{array}\right]\enskip
\Phi\!\left[\begin{array}c0\\ \!U_b\!\\ 0\end{array}\right]
\!\!=\!\!\left[\begin{array}c f\\
\!\!-fC_b\!\!\\ 
*\end{array}\right]\enskip
\Phi\!\left[\begin{array}c 1\\ \!\!-C_b\!\!\\ 0\end{array}\right]
\!\!=\!\!\left[\begin{array}c *\\
\!\!hU_b\!+\!\lambda C_b\!\!\\ 
*\end{array}\right]\!\!.
\end{equation}
\end{lemma}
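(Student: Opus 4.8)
The plan is to read this off from Lemma~\ref{REFORMULATE} together with the explicit shape~\eqref{endomorphisms} of a $\langle\,,\,\rangle$-preserving tractor endomorphism. One implication is free: if \eqref{symL} holds, then Lemma~\ref{REFORMULATE} already delivers all of \eqref{NEC_AND_SUFF}, and \eqref{compact_nec_and_suff} is obtained from it simply by forgetting the entries now marked~$*$. So the real content is the converse, together with the assertion (made in the sentence preceding the lemma) that those three suppressed entries are in fact forced to be what they are in \eqref{NEC_AND_SUFF}.

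For the converse, write $\Phi$ in the form \eqref{endomorphisms} --- legitimate since $\Phi$ preserves $\langle\,,\,\rangle$ --- and evaluate only the middle ($\mu_b$) slot of each of the three identities in \eqref{compact_nec_and_suff}. Applied to the first of the three test tractors, \eqref{endomorphisms} returns $-X_b$ in the $\mu_b$-slot, so the first identity is exactly the first relation of~\eqref{symL}; applied to the second it returns $F_b{}^cU_c$, so the second identity is the second relation of~\eqref{symL}; applied to the third it returns $Y_b-F_b{}^cC_c$, so the third identity is the third relation of~\eqref{symL}. Each relation of \eqref{symL} thus drops out of a single $\mu_b$-slot, with no reference to the slots that \eqref{compact_nec_and_suff} still retains; hence \eqref{compact_nec_and_suff} implies~\eqref{symL}, and the equivalence with~\eqref{symL} is complete.

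It then remains to justify the $*$'s, i.e.\ to check that \eqref{compact_nec_and_suff} really does recover all of \eqref{NEC_AND_SUFF}. For this I would substitute \eqref{symL} back into \eqref{endomorphisms} and compute the three suppressed slots. Using $U^aU_a=1$ and $U^aC_a=0$, the $\sigma$-slot of the image of the third test tractor is $-X^bC_b-\lambda=-\lambda$; using in addition the skew-symmetry of $F_{bc}$, its $\rho$-slot is $Y^bC_b=\lambda C^bC_b$; and the $\rho$-slot of the image of the second test tractor is $-Y^bU_b=-h-F_{bc}C^cU^b$. The one place that needs a little care --- which I would single out as the main (if modest) obstacle --- is this last value: one must notice that the constraint $F_b{}^cU_c=-fC_b$ together with $F_{bc}=-F_{cb}$ forces $F_{bc}U^bC^c=fC^cC_c$, so that $-Y^bU_b=-h-fC^bC_b$, in agreement with~\eqref{NEC_AND_SUFF}. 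Everything else is bookkeeping with $\langle\,,\,\rangle$ and the identities $U^aU_a=1$, $U^aC_a=0$, $F_{(bc)}=0$.
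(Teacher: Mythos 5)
Your proof is correct and follows the paper's route: the paper disposes of this lemma in one line by observing that the converse direction of Lemma~\ref{REFORMULATE} never uses the starred slots, which is exactly what your slot-by-slot reading of the three $\mu_b$-components (giving $X_b=fU_b$, $F_b{}^cU_c=-fC_b$, and $Y_b=hU_b+\lambda C_b+F_b{}^cC_c$ respectively) makes explicit, and your verification of the three suppressed entries matches the paper's preceding claim that they are automatic. If anything you are slightly more careful than the paper, whose proof of Lemma~\ref{REFORMULATE} nominally extracts $h$ from the (now starred) $\rho$-slot of the second identity, whereas you recover it from the retained $\mu_b$-slot of the third.
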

\begin{proof} In proving Lemma~\ref{REFORMULATE} we never used the 
starred components.
\end{proof}
We aim to interpret Theorem~\ref{distinguished} as a system of ordinary
differential equations on the velocity $U^a$ and acceleration $C^a$
of~$\gamma$. Since $X^a$ should be tangent to $\gamma$ we may write~$X^b=fU^b$.
This is the first constraint from (\ref{symL}) and, of course, our conventions
have been chosen for this to be the case. Our next observation justifies $C^a$
as our choice of notation for the acceleration of both $\gamma$ and the
corresponding model circle in~${\mathbb{R}}^n$.
\begin{lemma}\label{acceleration}
In order to have a subbundle ${\mathcal{S}}$ of ${\mathcal{A}}|_\gamma$,
constrained by~\eqref{symL} and being preserved by the tractor directional
derivative $\partial$ along~$\gamma$, it is necessary that the field $C^b$ in
\eqref{symL} be the acceleration $\partial U^b$ of~$\gamma$ (defined with
respect to our choice of metric~$g_{ab}$).
\end{lemma}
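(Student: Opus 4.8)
The plan is to use the hypothesis that $\mathcal{S}$ is preserved by $\partial$ and extract, from the first nontrivial piece of the Leibniz computation, an identification of $C^b$. The key observation is that the subbundle $\mathcal{S}\subset\mathcal{A}|_\gamma$ is, fibrewise, cut out by the conditions \eqref{compact_nec_and_suff} of Lemma~\ref{compact_reformulate}, so in particular any $\Phi\in\Gamma(\mathcal{S})$ satisfies the middle equation
$$\Phi\!\left[\begin{array}c0\\ U_b\\ 0\end{array}\right]
=\left[\begin{array}c f\\ -fC_b\\ *\end{array}\right].$$
First I would apply $\partial$ to this identity and use the Leibniz rule, exactly as in the proof of Theorem~\ref{one}: $(\partial\Phi)\!\left[\begin{smallmatrix}0\\ U_b\\ 0\end{smallmatrix}\right] = \partial\big(\Phi\!\left[\begin{smallmatrix}0\\ U_b\\ 0\end{smallmatrix}\right]\big) - \Phi\,\partial\!\left[\begin{smallmatrix}0\\ U_b\\ 0\end{smallmatrix}\right]$, where the directional derivative on the right is computed from~\eqref{directional_derivative}, this time \emph{without} assuming $\partial U^b=0$, since $\gamma$ is no longer presumed to be a geodesic.

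Second, since $\mathcal{S}$ is preserved by $\partial$, the left-hand side $(\partial\Phi)\!\left[\begin{smallmatrix}0\\ U_b\\ 0\end{smallmatrix}\right]$ must again lie in the image pattern allowed by \eqref{compact_nec_and_suff}; in particular its top slot must reproduce the value of the new function $\tilde f$ and its middle slot must be $-\tilde f\,C_b + (\text{correction from }\partial U^b)$. Expanding $\partial\big(\Phi\!\left[\begin{smallmatrix}0\\ U_b\\ 0\end{smallmatrix}\right]\big) = \partial\!\left[\begin{smallmatrix}f\\ -fC_b\\ *\end{smallmatrix}\right]$ via~\eqref{directional_derivative} produces, in the top slot, $\partial f - U^a(-fC_a) = \partial f + f\,U^aC_a$; since $U^aC_a=0$ this is just $\partial f$, giving $\tilde f=\partial f$ up to the contribution of $\Phi\,\partial\!\left[\begin{smallmatrix}0\\ U_b\\ 0\end{smallmatrix}\right]$. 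The decisive term is $\Phi\,\partial\!\left[\begin{smallmatrix}0\\ U_b\\ 0\end{smallmatrix}\right]$: from~\eqref{directional_derivative}, $\partial\!\left[\begin{smallmatrix}0\\ U_b\\ 0\end{smallmatrix}\right] = \left[\begin{smallmatrix}-1\\ \partial U_b\\ *\end{smallmatrix}\right]$, i.e.\ it has a $-1$ in the top (density) slot together with $\partial U_b$ in the middle slot. Applying $\Phi$ and comparing the middle component of the full identity forces the middle slot of $\mathcal{S}$ to also control $\partial U_b$ in the same way it controls $C_b$; matching coefficients then yields $\partial U^b = C^b$ along $\gamma$.

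Concretely, the main computation is: evaluate the middle slot of
$$(\partial\Phi)\!\left[\begin{array}c0\\ U_b\\ 0\end{array}\right]
= \partial\!\left[\begin{array}c f\\ -fC_b\\ *\end{array}\right]
- \Phi\!\left[\begin{array}c -1\\ \partial U_b\\ *\end{array}\right],$$
use \eqref{endomorphisms} to compute $\Phi\!\left[\begin{smallmatrix}-1\\ \partial U_b\\ *\end{smallmatrix}\right]$, and then impose that the result has the form $\left[\begin{smallmatrix}\tilde f\\ -\tilde f C_b\\ *\end{smallmatrix}\right]$ demanded by \eqref{compact_nec_and_suff}. Reading off the middle slot gives an equation whose only way to hold, given that $\Phi$ restricted to the relevant tractor is governed by the first constraint $X^b=fU^b$ of~\eqref{symL}, is $\partial U^b = C^b$. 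I would also cross-check with the third equation of \eqref{compact_nec_and_suff}, applied to $\left[\begin{smallmatrix}1\\ -C_b\\ 0\end{smallmatrix}\right]$, whose middle slot $hU_b+\lambda C_b$ after differentiation must again match — this provides an independent confirmation and pins down the identification without sign ambiguity.

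The step I expect to be the main obstacle is the bookkeeping in the second paragraph: one must be careful that the function $f$ (and hence $\tilde f$) appearing in the top slots is genuinely the \emph{same} $f$ that multiplies $U^b$ in the $X^b$-slot, so that "matching the middle slot" really does isolate $C^b$ rather than some linear combination involving $\lambda$ or $h$. This is handled by noting that the constraints \eqref{compact_nec_and_suff} are a consistent reformulation of \eqref{symL} (Lemmas~\ref{REFORMULATE} and~\ref{compact_reformulate}), so all the scalar functions are determined coherently; once that is invoked, the extraction of $\partial U^b=C^b$ is a short linear-algebra step. No conformal-invariance check is needed here, since we are working throughout with a fixed background metric $g_{ab}$.
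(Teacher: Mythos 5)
Your overall strategy --- differentiate one of the fibrewise identities defining $\mathcal{S}$ by the Leibniz rule and demand that $\partial\Phi$ satisfy the same identities --- is exactly the paper's, but you have picked the wrong identity to differentiate, and the step where you claim to read off $C^b=\partial U^b$ does not work as described. The paper differentiates the \emph{first} equation of \eqref{NEC_AND_SUFF}. Since $\partial\left[\begin{smallmatrix}0\\ 0\\ 1\end{smallmatrix}\right]=\left[\begin{smallmatrix}0\\ U_b\\ 0\end{smallmatrix}\right]$, the Leibniz rule together with the first two equations of \eqref{NEC_AND_SUFF} gives a middle slot equal to $-(\partial f-\lambda)U_b-f(\partial U_b-C_b)$; as both $\partial U_b$ and $C_b$ are orthogonal to $U_b$ (the former because $U^aU_a=1$, the latter by \eqref{pair}) while the allowed form is a pure multiple of $U_b$, any single $\Phi$ with $f\neq0$ forces $C_b=\partial U_b$. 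That is the entire proof.

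Your route differentiates the \emph{second} equation instead, i.e.\ computes $(\partial\Phi)\left[\begin{smallmatrix}0\\ U_b\\ 0\end{smallmatrix}\right]$, which is where the paper later derives the conformal circle equation \eqref{cc}, not where it identifies $C^b$. In $\Phi\left[\begin{smallmatrix}-1\\ \partial U_b\\ \ast\end{smallmatrix}\right]$ the derivative $\partial U_b$ enters the middle slot only through the term $F_b{}^c\partial U_c$ of \eqref{endomorphisms} --- not through the constraint $X^b=fU^b$, as you assert --- and must be compared against the $F_{bc}C^c$ hidden inside $Y_b=hU_b+\lambda C_b+F_{bc}C^c$. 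Carrying your computation through, the requirement that the middle slot have the form $-\tilde fC_b$ reduces to $fE_b+F_b{}^c(\partial U_c-C_c)=0$, where $E_b$ is the conformal-circle expression appearing in the appendix. A single $\Phi$ forces neither $E_b=0$ nor $\partial U_c=C_c$ from this; to conclude you must let $\Phi$ range over the whole fibre of $\mathcal{S}$ (for instance take $f=0$ and $F$ an arbitrary skew map annihilating $U$, and use $n\geq3$ so that the skew endomorphisms of $U^\perp$ have trivial common kernel). So the ``short linear-algebra step'' you defer is precisely where the content lies and, with the mechanism you describe, it fails; differentiating the first identity of \eqref{NEC_AND_SUFF} instead makes the lemma immediate.
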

\begin{proof}
According to Lemma~\ref{REFORMULATE} the equations (\ref{symL}) are equivalent
to (\ref{NEC_AND_SUFF}) and, from~(\ref{conformal_tractor_connection}), the
tractor connection $\partial=U^a\nabla_a$ along $\gamma$ is given by
\begin{equation}\label{DIRECTIONAL_DERIVATIVE}
\partial\left[\begin{array}c\sigma\\ \mu_b\\ \rho\end{array}\right]= 
\left[\begin{array}c\partial\sigma-U^a\mu_a\\
\partial\mu_b+U_b\rho+U^a\Rho_{ab}\sigma\\ 
\partial\rho-U^a\Rho_a{}^b\mu_b\end{array}\right].\end{equation}
The Leibniz rule now calculates the effect of $\partial\Phi$ and, in 
particular, we find from (\ref{NEC_AND_SUFF}) that 
$$(\partial\Phi)\!\left[\begin{array}c0\\ 0\\ 1\end{array}\right]
\!=\!\partial\!\left[\begin{array}c 0\\
\!\!-fU_b\!\\ \lambda\end{array}\right]-
\Phi\partial\!
\left[\begin{array}c0\\ 0\\ 1\end{array}\right]
\!=\!\left[\begin{array}c0\\ 
\!\!-(\partial f-\lambda)U_b-f(\partial U_b-C_b)\!\\
\!\partial\lambda\!+\!h\!+\!f(C^aC_a\!+\!\Rho_{ab}U^aU^b)
\end{array}\right]\!\!.$$
Comparison with (\ref{NEC_AND_SUFF}) shows that $C_b=\partial U_b$, as 
required. 
\end{proof}

For a full interpretation of Theorem~\ref{distinguished} in conformal geometry
we are obliged to investigate what it means for the tractor directional
derivative $\partial$ to preserve all the equations from (\ref{NEC_AND_SUFF}).
Whilst certainly possible, some equations are automatic and, according to
Lemma~\ref{compact_reformulate}, it suffices to start with an endomorphism
$\Phi$ satisfying (\ref{NEC_AND_SUFF}) and investigate what it means for 
$\partial\Phi$ to satisfy~(\ref{compact_nec_and_suff}). 
This investigation was begun in the proof of Lemma~\ref{acceleration} above 
and next we should consider
$$(\partial\Phi)\left[\begin{array}c0\\ U_b\\ 0\end{array}\right]
=\partial\Big(\Phi\left[\begin{array}c0\\ U_b\\ 0\end{array}\right]\Big)
-\Phi\partial\left[\begin{array}c0\\ U_b\\ 0\end{array}\right]\!,$$
the right hand side of which may be computed from (\ref{NEC_AND_SUFF})
and~(\ref{DIRECTIONAL_DERIVATIVE}). Indeed, we find
$$\begin{array}{rcl}
\partial\Big(\Phi\left[\begin{array}c0\\ U_b\\ 0\end{array}\right]\Big)
&=&\partial\left[\begin{array}c f\\
-fC_b\\ 
-h-fC^bC_b\end{array}\right]\\[-5pt] \\
&=&\left[\begin{array}c\partial f\\ 
-\partial(fC_b)-hU_b-fC^aC_aU_b+fU^a\Rho_{ab}\\ 
-\partial(h+fC^aC_a)+f\Rho_{ab}U^aC^b\end{array}\right]
\end{array}$$
and
$$\Phi\partial\left[\begin{array}c0\\ U_b\\ 0\end{array}\right]
=\Phi\left[\begin{array}c-1\\
C_b\\ 
-\Rho_{ab}U^aU^b\end{array}\right]
=\left[\begin{array}c \lambda\\
(f\Rho_{ac}U^aU^c-h)U_b-\lambda C_b\\ 
-\lambda(C^bC_b+\Rho_{bc}U^bU^c)\end{array}\right]$$
so 
$$(\partial\Phi)\left[\begin{array}c0\\ U_b\\ 0\end{array}\right]
=\left[\begin{array}c\partial f-\lambda\\ 
-(\partial f-\lambda)C_b
-fE_b\\ *\end{array}\right],$$
where $E_b\equiv\partial C_b-\Rho_{bc}U^c+(C^aC_a+\Rho_{ac}U^aU^c)U_b$ and, 
for our present purposes, it does not matter what is the last entry. We 
conclude that ${\mathcal{S}}$ is preserved along $\gamma$ in accordance with 
Theorem~\ref{distinguished}, if and only if $E_b$ is identically zero, which 
is the conformal circles equation~(\ref{cc}). 

Finally, to complete our investigation of (\ref{compact_nec_and_suff}) we 
should compute
$$(\partial\Phi)\left[\begin{array}c1\\ -C_b\\ 0\end{array}\right]
=\partial\Big(\Phi\left[\begin{array}c1\\ -C_b\\ 0\end{array}\right]\Big)
-\Phi\partial\left[\begin{array}c1\\ -C_b\\ 0\end{array}\right]\!.$$
Well, from (\ref{NEC_AND_SUFF}), we find 
$$\begin{array}{rcl}
\partial\Big(\Phi\left[\begin{array}c1\\ -C_b\\ 0\end{array}\right]\Big)
&=&\partial\left[\begin{array}c -\lambda\\
hU_b+\lambda C_b\\ 
\lambda C^aC_a\end{array}\right]\\[-5pt] \\
&=&\left[\begin{array}c -\partial\lambda-h\\
\partial(hU_b+\lambda C_b)+\lambda C^aC_aU_b-\lambda\Rho_{ab}U^a\\ 
*\end{array}\right]
\end{array}$$
and
$$\Phi\partial\left[\begin{array}c1\\ -C_b\\ 0\end{array}\right]
=\Phi\left[\begin{array}c 0\\
-\partial C_b+U^a\Rho_{ab}\\ 
U^a\Rho_a{}^bC_b\end{array}\right]
=\Phi\left[\begin{array}c 0\\
(C^aC_a+\Rho_{ac}U^aU^c)U_b\\ 
\Rho_{ab}U^aC^b\end{array}\right],$$
where we have just used (\ref{cc}) to rewrite $\partial C_b-\Rho_{ab}U^a$. 
From here,
$$\Phi\partial\left[\begin{array}c1\\ -C_b\\ 0\end{array}\right]
=\left[\begin{array}c f(C^aC_a+\Rho_{ac}U^aU^c)\\
-f\Rho_{ac}U^aC^cU_b-f(C^aC_a+\Rho_{ac}U^aU^c)C_b\\ 
* \end{array}\right].$$
Also note that (\ref{cc}) allows us to write 
$$\partial(hU_b+\lambda C_b)+\lambda C^aC_aU_b-\lambda\Rho_{ab}U^a
=(\partial h-\lambda\Rho_{ac}U^aU^c)U_b+(\partial\lambda+h)C_b$$
and so, finally,
$$(\partial\Phi)\left[\begin{array}c1\\ -C_b\\ 0\end{array}\right]
=\left[\begin{array}c 
-\partial\lambda-h-f(C^aC_a+\Rho_{ac}U^aU^c)\\[2pt]
\begin{array}{l}(\partial h-\lambda\Rho_{ac}U^aU^c+f\Rho_{ac}U^aC^c)U_b\\[-1pt]
\qquad{}+(\partial\lambda+h+f(C^aC_a+\Rho_{ac}U^aU^c))C_b\end{array}\\[8pt]
*\end{array}\right],$$
in accordance with~(\ref{compact_nec_and_suff}). We have proved the following.
\begin{thm} Let $M$ be a Riemannian manifold. An unparameterised curve
$\gamma\hookrightarrow M$ is distinguished in the sense of
Theorem~\ref{distinguished}, modelled on the circles in ${\mathbb{R}}^n$, if 
and only if \eqref{cc} holds along~$\gamma$. 
\end{thm}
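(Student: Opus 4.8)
The plan is to reuse the argument already developed for Theorem~\ref{one}, now keeping the acceleration term that was there set to zero. First I would invoke Theorem~\ref{distinguished}: the curve $\gamma$ is distinguished and modelled on the circles of ${\mathbb{R}}^n$ exactly when there is a subbundle ${\mathcal{S}}\subset{\mathcal{A}}|_\gamma$ — necessarily of dimension $\big((n-1)(n-2)/2\big)+3$ — whose fibres are everywhere conjugate to the symmetry algebra of a circle found in Lemma~\ref{symmetry_algebra_of_a_circle}, and which is preserved by the tractor connection $\partial=U^a\nabla_a$ along~$\gamma$. Writing adjoint tractors as the $\langle\,,\,\rangle$-preserving endomorphisms (\ref{endomorphisms}) of the standard tractor bundle, the fibrewise constraint becomes precisely (\ref{symL}), and by Lemma~\ref{REFORMULATE}, in its streamlined form Lemma~\ref{compact_reformulate}, this in turn is equivalent to the three (co)vector conditions (\ref{compact_nec_and_suff}) on the corresponding endomorphism $\Phi$. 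So the theorem is reduced to the assertion: the class of endomorphisms cut out by (\ref{compact_nec_and_suff}) is stable under the tractor directional derivative $\partial$ along $\gamma$ if and only if (\ref{cc}) holds.

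Next I would dispose of the spurious freedom. Since the canonical projection ${\mathcal{A}}\to TM$ must carry ${\mathcal{S}}$ onto $T\gamma$, the vector field $X^b=fU^b$ is forced; and Lemma~\ref{acceleration} — obtained by applying $\partial$ to $\Phi$ on the tractor $(0,0,1)^{\mathrm t}$ and reading off the middle slot, using (\ref{DIRECTIONAL_DERIVATIVE}) — forces the field $C^b$ appearing in (\ref{symL}) to be the genuine acceleration $\partial U^b$ of $\gamma$ relative to the chosen metric. With this identification made, $f$, $\lambda$, $h$ are the only remaining parameters, and preservation of ${\mathcal{S}}$ means that applying $\partial$ to a $\Phi$ of the form (\ref{compact_nec_and_suff}) returns an endomorphism of the same form, with $f,\lambda,h$ replaced by some new triple $\tilde f,\tilde\lambda,\tilde h$.

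The computational core is then three Leibniz-rule evaluations, parallel to the proof of Theorem~\ref{one} but carrying the extra $C_b$ terms. Evaluating $\partial\Phi$ on $(0,0,1)^{\mathrm t}$ reproduces the first equation of (\ref{compact_nec_and_suff}) with $\tilde f=\partial f-\lambda$ and $\tilde\lambda=\partial\lambda+h+f(C^aC_a+\Rho_{ab}U^aU^b)$ (and is where $C_b=\partial U_b$ was extracted). Evaluating $\partial\Phi$ on $(0,U_b,0)^{\mathrm t}$ yields, in its middle slot, the combination $-(\partial f-\lambda)C_b-fE_b$, where $E_b\equiv\partial C_b-\Rho_{bc}U^c+(C^aC_a+\Rho_{ac}U^aU^c)U_b$; matching the demanded form $-\tilde f\,C_b$ therefore forces $E_b\equiv0$ (take $f\neq0$), which is exactly (\ref{cc}), and conversely (\ref{cc}) makes this evaluation close up. Finally, evaluating $\partial\Phi$ on $(1,-C_b,0)^{\mathrm t}$ and using (\ref{cc}) to substitute for $\partial C_b-\Rho_{ab}U^a$ shows the third condition of (\ref{compact_nec_and_suff}) is then automatically preserved, with $\tilde h=\partial h-\lambda\Rho_{ac}U^aU^c+f\Rho_{ac}U^aC^c$; since (\ref{compact_nec_and_suff}) imposes nothing on the starred slots, no further condition is generated, and both directions of the equivalence follow.

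The hard part will be purely organizational rather than conceptual: keeping the three scalar bookkeepers $\tilde f,\tilde\lambda,\tilde h$ consistent across the three evaluations (the same $\tilde f$ has to emerge from the first and the second), and verifying that the single new equation $E_b=0$ that drops out of the second evaluation is enough to make the third close without producing a fourth constraint. This is routine given (\ref{endomorphisms}), (\ref{DIRECTIONAL_DERIVATIVE}), and the relations $U^aU_a=1$, $U^aC_a=0$ from (\ref{pair}), and it mirrors step-for-step the calculation already carried out for Theorem~\ref{one}.
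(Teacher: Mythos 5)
Your proposal is correct and follows the paper's own argument essentially step for step: the same reduction via Theorem~\ref{distinguished}, Lemmas~\ref{REFORMULATE}, \ref{compact_reformulate} and~\ref{acceleration}, the same three Leibniz-rule evaluations, and the same identification of the single surviving constraint $E_b=0$ with equation~\eqref{cc}. The only cosmetic difference is your parenthetical ``take $f\neq0$,'' which is a fine way to justify extracting $E_b=0$ from the middle slot, since the fibres of ${\mathcal{S}}$ contain endomorphisms with $f\neq0$.
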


\raggedright\raggedbottom

\end{document}